\newcounter{mnotecount}[section]
\newcommand{\rmnote}[1]{}%{\mnote{#1}}
\theoremstyle{plain}
\newtheorem{proposition}{Proposition}[section]
\newtheorem{theorem}[proposition]{Theorem}
\newtheorem{lemma}[proposition]{Lemma}
\theoremstyle{definition}
\newtheorem{example}[proposition]{Example}
\newtheorem{remark}[proposition]{Remark}
\newcommand{\RR}{\mathbb{R}}
\newcommand{\CC}{\mathbb{C}}
\newcommand{\NN}{\mathbb{N}}
\let\on=\operatorname
\title[Ultraholomorphic sectorial extensions of Beurling type]
{Ultraholomorphic sectorial extensions of Beurling type}
\author[D.N.~Nenning, A.~Rainer, and G.~Schindl]{David Nicolas Nenning, Armin Rainer, and Gerhard Schindl}
\address{Fakult\"at f\"ur Mathematik, Universit\"at Wien, Oskar-Morgenstern-Platz~1, A-1090 Wien, Austria.}
\email{david.nicolas.nenning@univie.ac.at}
\email{armin.rainer@univie.ac.at}
\email{gerhard.schindl@univie.ac.at}
\begin{document}

\begin{abstract}
We prove sectorial extension theorems for ultraholomorphic function classes of Beurling type defined by weight functions with a controlled loss of regularity. %(mixed setting).
The proofs are based on a reduction lemma, due to the second author,
which allows to extract the Beurling from the Roumieu case, which was treated recently by Jim\'{e}nez-Garrido, Sanz, and the third author.
In order to have control on the opening of the sectors, where the extensions exist,
we use the (mixed) growth index
and the order of quasianalyticity of weight functions.
As a consequence we obtain corresponding extension results for classes defined by weight sequences.
Additionally, we give information on the existence of continuous linear extension operators.
\end{abstract}

\thanks{AR was supported by FWF-Project P 32905-N, DNN and GS by FWF-Project P 33417-N}
\keywords{Ultraholomorphic function classes, extension results and extension operators, mixed setting, controlled loss of regularity, growth indices}
\subjclass[2020]{
26A12, %Rate of growth of functions, orders of infinity, slowly varying functions
30D60, %Quasi-analytic and other classes of functions of one complex variable
46A13, %Spaces defined by inductive or projective limits (LB, LF, etc.)
46E10  %Topological linear spaces of continuous, differentiable or analytic functions
}
\date{\today}

\maketitle

\section{Introduction}\label{Introduction}

The aim of this work is to prove sectorial extension results of Borel--Ritt type.
For a given formal power series with admissible growth behavior of the coefficients one
looks for an ultraholomorphic function defined on a sector in the Riemann surface of the logarithm
and asymptotic to the given series. Ultraholomorphic functions are holomorphic functions which
satisfy certain growth conditions imposed on its iterated derivatives.
In this paper we are primarily interested in the case that the growth conditions are defined by a weight function;
in the spirit of Braun--Meise--Taylor classes \cite{BraunMeiseTaylor90}.
We allow for a controlled loss of regularity in the passage from formal power series to ultraholomorphic function.
This manifests itself by the use of two weight functions and their mixed growth index
which gives an upper bound on the opening of the sector on which the ultraholomorphic extension exists;
abbreviating, we call this the \emph{mixed setting}.
Specifically, we treat the mixed Beurling case (for precise definitions see \Cref{ultraspaces})
by reducing it to the mixed Roumieu case which was investigated by Jim\'{e}nez-Garrido, Sanz, and the third author in \cite{mixedsectorialextensions}.
This reduction procedure is based on a recent lemma proved and applied in a related context by the second author in \cite{beurlingultradiffmixedsetting}.

Let us briefly recall the historic background on ultraholomorphic sectorial extensions.
Classically, the problem was studied for Gevrey regularity, see Ramis \cite{zbMATH03636427}.
Thilliez \cite{Thilliezdivision} generalized the results to suitable weight sequences $M$
and associated with $M$ a \emph{growth index} $\gamma(M)$
which provides an upper bound for the opening of the sector on which the extension is defined.
The paper of Thilliez also extends earlier results of Schmets and Valdivia \cite{Schmetsvaldivia00}.
A different approach is pursued by Lastra, Malek, and Sanz \cite{LastraMalekSanzContinuousRightLaplace, Sanzsummability}.

In the recent papers \cite{sectorialextensions,sectorialextensions1} Jim\'{e}nez-Garrido, Sanz, and the third author
obtained analogous ultraholomorphic extension results for weight functions,
by transferring the ``complex'' method of \cite{LastraMalekSanzContinuousRightLaplace, Sanzsummability}
as well as the ``real'' method of \cite{Thilliezdivision}, respectively,
and by exploiting the technique of associating a \emph{weight matrix} with the given weight function, introduced in our article \cite{compositionpaper}.
In analogy to $\gamma(M)$, a growth index $\gamma(\omega)$ with similar properties was associated with a weight function $\omega$.

In all these works no loss of regularity occurs in the extension procedure.
The growth index $\gamma(\omega)$ is always dominated by the \emph{order of quasianalyticity} $\mu(\omega)$;
in general, one has the strict inequality $\gamma(\omega) < \mu(\omega)$, and the gap can be made arbitrarily large in individual examples.
While $\gamma(\omega)$ is connected to the existence of extensions
(on sectors of opening smaller than $\pi \gamma(\omega)$), the parameter $\mu(\omega)$ seems to be tied to the uniqueness of extensions
(on sectors of opening larger than $\pi \mu(\omega)$). (The latter statement is confirmed for certain $\omega$ which admit a weight sequence description
of the associated classes, but we conjecture that it holds in general.)

So, in order to have extensions on sectors of opening beyond $\pi \gamma(\omega)$, one is led
to allow for a controlled loss of regularity: one weight function $\sigma$ measures the regularity of the formal power series,
a second weight function $\omega$ that of its extension. The connection between $\sigma$ and $\omega$ is encoded in the
\emph{mixed growth index} $\gamma(\sigma,\omega)$, a natural generalization of $\gamma(\omega)$. In fact,
extensions exist on all sectors of opening smaller than $\pi \gamma(\sigma,\omega)$. Since $\gamma(\omega) \le \gamma(\sigma,\omega) \le \mu(\omega)$, with generally
strict inequalities, this means an improvement on the size of the sectors where extensions exist.
Moreover, extensions on sectors of all openings smaller than $\pi \mu(\omega)$ exist if $\sigma$ is allowed to depend on the opening.
These results were obtained in the Roumieu case by Jim\'{e}nez-Garrido, Sanz, and the third author
\cite{mixedsectorialextensions}; analogous statements hold for weight sequences.
It should be noted that, for technical reasons, the results involve a uniform shift of all weights
(i.e., a multiplication by the sequence $(p!)$ on the level of weight matrices)
which here we ignored for simplicity.
For a detailed study and comparison of the mentioned parameters we refer to \cite{index}.

The approach in \cite{mixedsectorialextensions} was the ``complex'' one of \cite{sectorialextensions},
since the ``real'' techniques of \cite{sectorialextensions1} failed in a crucial step.
At the time of writing \cite{mixedsectorialextensions} the mixed Beurling case could not be handled.
This changed thanks to a new reduction lemma proved by the second author in \cite{beurlingultradiffmixedsetting}
in order to deal with a similar situation concerning the ultradifferentiable Whitney extension problem.
Actually, this circle of ideas is intimately related to the problem at hand and was studied extensively in the literature.
We refer the interested reader to the (by no means exhaustive) list of papers
\cite{BonetBraunMeiseTaylorWhitneyextension}, \cite{BonetMeiseTaylorSurjectivity}, \cite{ChaumatChollet94}, \cite{Langenbruch94},
\cite{Schmetsvaldivia00}, \cite{surjectivity}, \cite{mixedramisurj},
 \cite{whitneyextensionmixedweightfunction}, \cite{whitneyextensionmixedweightfunctionII}, \cite{beurlingultradiffmixedsetting}.

In the setting of the present paper a weaker version of the aforementioned reduction lemma (namely, \Cref{lemma13generalizationultraholom})
suffices to \emph{fully} reduce the Beurling to the Roumieu case.
Thus it turns out that, again, the parameters $\gamma(\sigma,\omega)$ and $\mu(\omega)$
regulate the opening of the sectors on which extensions exist; see \Cref{Thm2ultraholombeurling}
and \Cref{Thm6ultraholombeurling}.
In addition, we provide sufficient conditions for the existence of continuous linear extension operators on suitable subspaces.

We point out that the reduction procedure in \cite{beurlingultradiffmixedsetting} involves a small loss of information, since it leads to a
stronger condition in the Beurling case.
Thanks to the ramified nature of the mixed strong non-quasianalyticity condition defining the index $\gamma(\sigma,\omega)$,
there is no loss of information in the ultraholomorphic sectorial extension problem.

The paper is organized as follows.
After discussing weight functions and sequences in \Cref{weightscond} and ultraholomorphic function and sequence spaces in \Cref{ultraspaces},
we prove in \Cref{mixedultraholomBeur} the main results on sectorial extension of mixed Beurling type for weight functions.
In \Cref{Thm2ultraholombeurling} the opening of the sector is controlled by $\gamma(\sigma,\omega)$ and in \Cref{Thm6ultraholombeurling} by $\mu(\omega)$. 
In the final \Cref{ultraholomweightsequ}, the results for weight functions are applied to the case that the growth conditions are defined in terms
of weight sequences, similarly using $\gamma(M,N)$ in \Cref{Thm4ultraholombeurling} and $\mu(N)$ in \Cref{Thm5ultraholombeurling}.
In all these theorems information on the existence of continuous linear extension operators is provided.

\section{Weights and conditions} \label{weightscond}
\subsection{Weight functions}
A function $\omega:[0,\infty)\rightarrow[0,\infty)$ is called \emph{weight function}
if it is continuous, non-decreasing, $\omega(0)=0$, and $\lim_{t\rightarrow\infty}\omega(t)=\infty$.
If in addition $\omega(t)=0$ for all $t\in[0,1]$, then $\omega$ is said to be \emph{normalized}.

Let us consider the following (standardly used) conditions:
\begin{align}
	\tag{$\omega_1$}\label{om1} &\text{$\omega(2t)=O(\omega(t))$ as $t\rightarrow\infty$.}
	\\
	\tag{$\omega_2$}\label{om2} &\text{$\omega(t)=O(t)$ as $t\rightarrow\infty$.}
	\\
	\tag{$\omega_3$}\label{om3} &\text{$\log(t)=o(\omega(t))$ as $t\rightarrow\infty$.}
	\\
	\tag{$\omega_4$}\label{om4} &\text{$\varphi_{\omega}:t\mapsto\omega(e^t)$ is a convex function on $\RR$.}
	\\
	\tag{$\omega_5$}\label{om5} &\text{$\omega(t)=o(t)$ as $t\rightarrow\infty$.}
	\\
	\tag{$\omega_6$}\label{om6} &\text{$\exists H\ge 1\;\forall t\ge 0:\;2\omega(t)\le\omega(H t)+H$.}
	\\
	\tag{$\omega_{\text{nq}}$}\label{omnq} &\int_1^{\infty}\frac{\omega(t)}{t^2}dt<\infty.
	\\
	\tag{$\omega_{\text{snq}}$}\label{omsnq} &\exists C>0\;\forall y>0 : \int_1^{\infty}\frac{\omega(y t)}{t^2}dt\le C\omega(y)+C.
\end{align}
Weight functions $\omega$ satisfying \eqref{omnq} are said to be \emph{non-quasianalytic} and 
those satisfying \eqref{omsnq} are called \emph{strongly non-quasianalytic} or simply \emph{strong}. 
Note that \eqref{omsnq} $\Rightarrow$ \eqref{omnq} $\Rightarrow$ \eqref{om5} $\Rightarrow$ \eqref{om2}.

For ease of reference we define the following sets of weight functions:
\begin{align*}
	\hypertarget{omset0}{\mathcal{W}_0} &:= \{\omega : \omega \text{ is a normalized weight function satisfying } %\hyperlink{om0}{(\omega_0)},
	\hyperlink{om3}{(\omega_3)} \text{ and }\hyperlink{om4}{(\omega_4)}\},
	\\
	\hypertarget{omset1}{\mathcal{W}} &:= \{\omega\in\mathcal{W}_0 : \omega \text{ satisfies } \eqref{om1}\}.
\end{align*}
We remark that in \cite{beurlingultradiffmixedsetting} the elements of \hyperlink{omset1}{$\mathcal{W}$} (and only those) were called (normalized) weight functions.

For any $\omega\in\hyperlink{omset0}{\mathcal{W}_0}$ we define the \emph{Young conjugate} of $\varphi_{\omega}$ by
\begin{equation}\label{legendreconjugate}
\varphi^{*}_{\omega}(x):=\sup\{x y-\varphi_{\omega}(y): y\ge 0\}, \quad x\ge 0;
\end{equation}
it will appear in \Cref{omegaproperties} and in the definition of the ultraholomorphic classes in \Cref{ultraspaces}.

Given two weights $\sigma, \tau$ we write $\sigma\hypertarget{ompreceq}{\preceq}\tau$ if $\tau(t)=O(\sigma(t))$ as $t \to \infty$;
it reflects the inclusion relation of the corresponding ultraholomorphic classes, see \Cref{sec:inclusion}.
We call two weights $\sigma$ and $\tau$ \emph{equivalent} if $\sigma \preceq \tau$ and $\tau \preceq \sigma$.

\subsection{Weight functions obtained by power substitutions}

For any weight function $\omega$ and $r>0$ we denote by $\omega^r$ the weight $\omega^r(t):=\omega(t^r)$ resulting from the power substitution $t \mapsto t^r$.
Clearly, $(\omega^r)^s=\omega^{rs}$ for any $r,s>0$.

It is easy to see (cf.\ \cite[p.1635]{mixedsectorialextensions}) that $\omega^r \in \hyperlink{omset1}{\mathcal{W}}$ if and only if $\omega \in \hyperlink{omset1}{\mathcal{W}}$.
Furthermore, we have $\sigma\hyperlink{ompreceq}{\preceq}\tau$ if and only if $\sigma^r \hyperlink{ompreceq}{\preceq}\tau^r$.
In particular, $\sigma$ and $\tau$ are equivalent if and only if  $\sigma^r$ and $\tau^r$ are equivalent (for some/any $r>0$).

On the other hand, \eqref{omnq}, \eqref{omsnq},
and \eqref{om5} might, in general, not be preserved when passing from $\omega$ to $\omega^r$.
In fact (cf.\ \cite[(1)]{mixedsectorialextensions}), for $r>0$ the weight function $\omega^r$
is non-quasianalytic (i.e., satisfies \eqref{omnq}) if and only if $\omega$ fulfills
\begin{equation}
	\tag{$\omega_{\text{nq}_r}$} \label{omnqr}
	\int_1^{\infty}\frac{\omega(t)}{t^{1+1/r}}dt<\infty.		
\end{equation}

\subsection{Mixed growth index} \label{sec:MGI}

For weight functions $\omega,\sigma$ and $r>0$
we recall the condition (cf.\ \cite[Section 3.1]{mixedsectorialextensions} and also \cite[(5.4)]{mixedramisurj})
\begin{equation}
\tag*{$(\sigma,\omega)_{\gamma_r}$} \label{gammarfctmix}
\exists C>0\;\forall t\ge 0 :\int_1^{\infty}\frac{\omega(ty)}{y^{1+1/r}}dy\le C\sigma(t)+C.
\end{equation}
Note that, $\omega$ being non-decreasing, the integral is bounded below by $r \omega(t)$
so that this condition
implies $\sigma\hyperlink{ompreceq}{\preceq}\omega$.
Clearly, $(\sigma,\omega)_{\gamma_r}$ implies $(\sigma,\omega)_{\gamma_{r'}}$ for all $0<r'<r$.

The \emph{mixed growth index} is defined by
\begin{equation*}%\label{growthindex}
\gamma(\sigma,\omega):=\sup\{r>0: \text{\ref{gammarfctmix} is satisfied}\}
\end{equation*}
and $\gamma(\sigma,\omega) := 0$ if \ref{gammarfctmix} holds for no $r>0$.
%see \cite[p.1642, Remark 4]{mixedsectorialextensions}.
Putting $\gamma(\omega):=\gamma(\omega,\omega)$ we recover the growth index $\gamma(\omega)$ introduced and studied in \cite{index,sectorialextensions,sectorialextensions1}.

We have $\gamma(\omega)\le\gamma(\sigma,\omega)$ provided that $\sigma\hyperlink{ompreceq}{\preceq}\omega$, cf.\ \cite[Lemma 3]{mixedsectorialextensions}.
By \cite[Corollary 2.14]{index},  $\gamma(\omega)>0$ if and only if \eqref{om1} holds true.
In particular, for $\omega\in\hyperlink{omset1}{\mathcal{W}}$, $\gamma(\sigma,\omega)>0$ if and only if $\sigma\hyperlink{ompreceq}{\preceq}\omega$.

A weight function $\omega$ is strongly non-quasianalytic (i.e.\ satisfies \eqref{omsnq})
if and only if $\gamma(\omega)> 1$; see \cite[Corollary 2.13]{index}.
And, clearly, $\gamma(\sigma,\omega)>1$ implies that $\omega$ is non-quasianalytic and thus satisfies \eqref{om5}.

Note that $(\sigma,\omega)_{\gamma_r}$ if and only if $(\sigma^r,\omega^r)_{\gamma_1}$ and so
(cf.\ \cite[Remark 7 $(i)$]{mixedsectorialextensions})
    \begin{equation} \label{indextrans}
     \gamma(\sigma,\omega) =  r \gamma(\sigma^r,\omega^r) \quad \text{ for all } r>0.
    \end{equation}

\begin{remark}\label{mixedindexremark}
In \cite{beurlingultradiffmixedsetting} the condition $(\sigma,\omega)_{\gamma_{1/r}}$ was denoted by $(S_r)$ and the pair $(\omega,\sigma)$
was called $1/r$-strong.
\end{remark}

\subsection{Order of quasianalyticity} \label{lem:OQ}
The \emph{order of quasianalyticity} of a weight function $\omega$ is defined by (cf.\ \cite[$(18)$]{mixedsectorialextensions})
\begin{equation*}
\mu(\omega):=\sup\Big\{r>0: \int_1^{\infty}\frac{\omega(u)}{u^{1+1/r}}du<\infty\Big\}
=\sup\{r>0 : \omega \text{ satisfies } \eqref{omnqr}\}
\end{equation*}
and $\mu(\omega):=0$ if \eqref{omnqr} holds for no $r>0$.
It is preserved under equivalence of weight functions, since the condition \eqref{omnqr} is preserved.

We have $\gamma(\sigma,\omega)\le\mu(\omega)$ for any weight $\sigma\hyperlink{ompreceq}{\preceq}\omega$, cf.\ \cite[Lemma 7]{mixedsectorialextensions}.
Thus $\mu(\omega)>0$ if $\omega\in\hyperlink{omset1}{\mathcal{W}}$, by the properties of the mixed growth index, see \Cref{sec:MGI}.

\subsection{Weight sequences}

Any positive sequence $M=(M_p)\in\RR_{>0}^{\NN}$ is called \emph{weight sequence}. With $M$ we associate
the sequences $m=(m_p)$ and $\mu=(\mu_p)$ defined by $m_p:=\frac{M_p}{p!}$ and $\mu_p:=\frac{M_p}{M_{p-1}}$, $\mu_0:=1$, respectively.
A weight sequence $M$ is called \emph{normalized} if $1=M_0\le M_1$. 
For any weight sequence $M$ and $r>0$ we define the \emph{power} $M^{r}:=((M_p)^{r})_{p\in\NN}$.

A weight sequence $M$ is called \emph{log-convex} if
\begin{equation}
	\tag{$\text{lc}$} \label{lc}
	\forall p\in\NN_{>0} : M_p^2\le M_{p-1} M_{p+1},	
\end{equation}
which is equivalent to $\mu$ being non-decreasing. It is called \emph{strongly log-convex} if \eqref{lc} holds for the associated sequence $m$.
We say that $M$ has \emph{moderate growth} if
\begin{equation}
	\tag{$\text{mg}$}\label{mg}
	\exists C\ge 1 \;\forall p,q\in\NN : M_{p+q}\le C^{p+q} M_p M_q.	
\end{equation}
Replacing $M$ by $m$ or by $M^{r}$ (for arbitrary $r>0$) gives an equivalent condition. 
A weight sequence $M$ is called \emph{non-quasianalytic,} if
\begin{equation}
	\tag{$\text{nq}$} \label{mnq}
	\sum_{p=1}^{\infty}\frac{1}{\mu_p}<\infty.
\end{equation}
Note that $M^{1/r}$ is non-quasianalytic if and only if $M$ satisfies
\begin{equation}
	\tag{$\text{nq}_r$} \label{mnqr}
	\sum_{p=1}^{\infty}\Big(\frac{1}{\mu_p}\Big)^{1/r}<\infty.	
\end{equation}
For later reference we consider the conditions (cf.\ \cite{dissertation}, \cite{petzsche} and \cite{BonetMeiseMelikhov07})
\begin{align}
	\tag{$\gamma_1$} \label{gamma1}
	\sup_{p \in \NN_{>0}} &\frac{\mu_p}{p} \sum_{k \ge p} \frac{1}{\mu_k} < \infty,
	\\
	\tag{$\beta_3$} \label{beta3}
	\exists Q\in\NN_{>0} &: \liminf_{p\rightarrow\infty}\frac{\mu_{Qp}}{\mu_p}>1.
\end{align}
Two weight sequences $M$ and $N$ are said to be \emph{equivalent} if $C^{-1} \le \big(\frac{M_p}{N_p}\big)^{1/p} \le C$ for some $C>0$ (cf.\ \Cref{sec:inclusion}).

For ease of reference we introduce the set of weight sequences
$$\hypertarget{LCset}{\mathcal{LC}}:=\big\{M\in\RR_{>0}^{\NN}:\;M\;\text{is normalized, log-convex},\;\lim_{p \rightarrow\infty}(M_p)^{1/p}=\infty\big\}.$$
We shall recall the (mixed) growth index and the order of quasianalyticity for weight sequences in
\Cref{ultraholomweightsequ}.

\subsection{Associated function}

With $M\in\hyperlink{LCset}{\mathcal{LC}}$ one associates (cf.\ \cite[Chapitre I]{mandelbrojtbook} and \cite[Definition 3.1]{Komatsu73})
the function $\omega_M: [0,\infty) \to [0,\infty)$ defined by
\begin{equation*}\label{assofunc}
\omega_M(t):=\sup_{p\in\NN}\log\Big(\frac{t^p}{M_p}\Big)\;\;\;\text{for}\;t>0, \quad \omega_M(0):=0.
\end{equation*}
An easy calculation shows that, for all $r>0$,
\begin{equation} \label{omegaMspower}
\omega_M^r = r\omega_{M^{1/r}}.	
\end{equation}
We collect some well-known properties for $\omega_M$.

\begin{lemma}[{Cf.\ \cite[Lem. 2.4]{sectorialextensions} and \cite[Lem. 3.1]{sectorialextensions1}}]\label{assofuncproper}
Let $M\in\hyperlink{LCset}{\mathcal{LC}}$. Then:
\begin{itemize}
\item[(i)] $\omega_M$ belongs to \hyperlink{omset0}{$\mathcal{W}_0$}.

\item[(ii)] If $M$ satisfies $\eqref{gamma1}$, then $\omega_M$ fulfills \eqref{omsnq} (which in turn implies \eqref{om1}).

\item[(iii)] $M$ has moderate growth if and only if $\omega_M$ satisfies \eqref{om6}.
\end{itemize}
\end{lemma}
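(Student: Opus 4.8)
The three assertions are standard facts about the associated function, and I would treat them by exploiting the description of $\omega_M$ as a Young/Legendre-type transform. The key observation is that $\varphi_{\omega_M}(s)=\omega_M(e^s)=\sup_{p\in\NN}\big(ps-\log M_p\big)$ is a supremum of affine functions of $s$; this single fact drives (i), a dual inversion formula drives (iii), while (ii) is the genuinely nontrivial Petzsche-type equivalence and will be the main obstacle.

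For (i): since $(M_p)^{1/p}\to\infty$, for each fixed $s$ the quantity $ps-\log M_p$ tends to $-\infty$, so the supremum defining $\varphi_{\omega_M}$ is finite and attained; being a supremum of affine functions it is convex, which is precisely \eqref{om4}. Finiteness and convexity give continuity of $\varphi_{\omega_M}$, hence of $\omega_M$ on $(0,\infty)$, and monotonicity is inherited from the increasing maps $t\mapsto p\log t-\log M_p$. Normalization $1=M_0\le M_1$ together with log-convexity forces $M_p\ge 1$ for all $p$, so for $t\le 1$ every term $\log(t^p/M_p)$ with $p\ge 1$ is $\le 0$ and the supremum is attained at $p=0$, giving $\omega_M\equiv 0$ on $[0,1]$. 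Finally, the lower bounds $\omega_M(t)\ge p\log t-\log M_p$ for each fixed $p$ show both $\omega_M(t)\to\infty$ and $\liminf_{t\to\infty}\omega_M(t)/\log t\ge p$ for every $p$, i.e.\ \eqref{om3}. Thus $\omega_M\in\mathcal{W}_0$.

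For (iii): in the forward direction I would write $2\omega_M(t)=\sup_{p,q}\big((p+q)\log t-\log(M_pM_q)\big)$ and insert the moderate-growth bound $M_pM_q\ge C^{-(p+q)}M_{p+q}$; reindexing by $n=p+q$ collapses the double supremum to $\omega_M(Ct)$, yielding the even stronger estimate $2\omega_M(t)\le\omega_M(Ct)$, hence \eqref{om6}. For the converse I would use the inversion formula $M_p=\sup_{t>0}t^p e^{-\omega_M(t)}$, valid because $M$ is log-convex. Feeding \eqref{om6} in the form $\omega_M(t)\le 2\omega_M(t/H)+H$ into the expression for $M_{p+q}$ and substituting $v=t/H$ factorizes the supremum into the product bounding $M_pM_q$, producing $M_{p+q}\le e^H H^{p+q}M_pM_q$, i.e.\ \eqref{mg}.

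The heart of the matter is (ii), the implication \eqref{gamma1}$\Rightarrow$\eqref{omsnq}, and this is where the real work lies. Here I would pass to the piecewise-linear structure: $\omega_M$ is linear in $\log t$ with $\omega_M'(t)=p/t$ on each interval $(\mu_p,\mu_{p+1})$, so that $\omega_M(t)=\int_1^t \Sigma_M(s)\,ds/s$ for $t\ge 1$, where $\Sigma_M(s)=\#\{p\ge 1:\mu_p\le s\}$ is the counting function of the quotient sequence. The plan is to rewrite the integral in \eqref{omsnq}, after the substitution $u=yt$, as a sum over the $\mu_p$ and to recognize the resulting tail sums $\sum_{k\ge p}1/\mu_k$; the supremum condition \eqref{gamma1} is exactly what controls these tails against $\mu_p/p$ and hence bounds the integral by $C\omega_M(y)+C$. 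This is the Petzsche argument (cf.\ \cite{petzsche}), and the delicate book-keeping in translating between the integral and the discrete data is the main obstacle. The parenthetical claim that \eqref{omsnq} implies \eqref{om1} is then immediate from \Cref{sec:MGI}: \eqref{omsnq} is equivalent to $\gamma(\omega_M)>1$, which forces $\gamma(\omega_M)>0$, equivalent to \eqref{om1}.
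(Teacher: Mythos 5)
The paper does not prove this lemma at all---it simply cites \cite[Lem.\ 2.4]{sectorialextensions} and \cite[Lem.\ 3.1]{sectorialextensions1}---so any honest argument is ``a different route''. Your parts (i) and (iii) are essentially complete and correct: the convex-duality picture ($\varphi_{\omega_M}$ as a supremum of affine functions, the inversion formula $M_p=\sup_{t>0}t^pe^{-\omega_M(t)}$ for log-convex normalized $M$, and the reindexing $n=p+q$) is exactly the standard mechanism, and your derivation of $M_p\ge 1$ from $\mu_1=M_1\ge 1$ and log-convexity correctly yields normalization of $\omega_M$. One direction slip in (iii) should be fixed: \eqref{om6} evaluated at $t/H$ gives $2\omega_M(t/H)\le\omega_M(t)+H$, i.e.\ the \emph{lower} bound $\omega_M(t)\ge 2\omega_M(t/H)-H$, not the upper bound you wrote; it is this lower bound on $\omega_M$ (hence upper bound on $e^{-\omega_M(t)}$) that you need to feed into $M_{p+q}=\sup_t t^{p+q}e^{-\omega_M(t)}$, and with it your stated conclusion $M_{p+q}\le e^HH^{p+q}M_pM_q$ does follow.

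Part (ii) is where your proposal stops short of a proof. You correctly identify the right objects (the counting function $\Sigma_M$, the representation $\omega_M(t)=\int_0^t\Sigma_M(s)\,ds/s$, the tail sums $\sum_{k\ge p}1/\mu_k$) and the right source (Petzsche), but the actual estimate---showing that \eqref{gamma1} bounds $\int_1^\infty\omega_M(yt)t^{-2}\,dt$ by $C\omega_M(y)+C$ uniformly in $y$---is exactly the content of the statement, and you explicitly defer it as ``the main obstacle''. As written this is a plan, not an argument; to close it you must either carry out the computation $\int_y^\infty\omega_M(s)s^{-2}\,ds=\sum_p\int_{\max(y,\mu_p)}^\infty\log(s/\mu_p)s^{-2}\,ds$ and compare with $\omega_M(y)$ via \eqref{gamma1}, or cite \cite{petzsche} (or \cite[Cor.\ 2.13]{index}) as the paper effectively does. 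Your deduction of the parenthetical claim, that \eqref{omsnq} implies \eqref{om1} via $\gamma(\omega_M)>1\Rightarrow\gamma(\omega_M)>0$, is consistent with the equivalences recorded in \Cref{sec:MGI} and is fine.
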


\subsection{Weight matrices}
Cf.\ \cite[Section 4]{compositionpaper}.
A \emph{weight matrix} $\mathcal{M}$ is a (one parameter) family of weight sequences $\mathcal{M}:=\{M^{[x]}: x\in \RR_{>0}\}$ such that
each $M^{[x]}$ is normalized and non-decreasing, and $M^{[x]} \le M^{[y]}$ if $x \le y$.
We call a weight matrix $\mathcal{M}$ \emph{standard log-convex}, abbreviated by \hypertarget{Msc}{$(\mathcal{M}_{\on{sc}})$},
if
$M^{[x]} \in\hyperlink{LCset}{\mathcal{LC}}$  for all $x >0$.

Weight matrices are a convenient technical tool for working with weight functions:

\begin{lemma}[{\cite[Section 5]{compositionpaper}}]\label{omegaproperties}
	With every $\omega\in\hyperlink{omset0}{\mathcal{W}_0}$ one can associate an \hyperlink{Msc}{$(\mathcal{M}_{\on{sc}})$} weight matrix
	$\Omega:=\{W^{[l]}: l>0\}$
	by setting
    $$W^{[l]}_j:=\exp\Big(\frac{1}{l}\varphi^{*}_{\omega}(lj)\Big).$$
    If $\omega$ additionally satisfies \eqref{om1}, then
     \begin{equation}\label{newexpabsorb}
     \forall h\ge 1 \;\exists A\ge 1\;\forall l>0\;\exists D\ge 1\;\forall j\in\NN :\ h^jW^{[l]}_j\le D W^{[Al]}_j.
     \end{equation}
    Moreover, $\omega \in \hyperlink{omset0}{\mathcal{W}_0}$ is non-quasianalytic if and only if some/each $W^{[l]}$ is non-quasianalytic.
  	All weight sequences $W^{[l]}$ are equivalent if and only if $\omega$ satisfies \eqref{om6}
  	which in turn is equivalent to some/each $W^{[l]}$ having moderate growth.
\end{lemma}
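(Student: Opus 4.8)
The plan is to route everything through the Young conjugate $\varphi^{*}_{\omega}$ and the elementary duality between conditions on $\omega$ and conditions on $\varphi^{*}_{\omega}$. Write $\varphi_{\omega}(y)=\omega(e^y)$, which is convex by \eqref{om4}, vanishes for $y\le 0$ by normalization, and satisfies $\varphi_{\omega}(y)/y\to\infty$ precisely because of \eqref{om3}; this last fact guarantees $\varphi^{*}_{\omega}(x)<\infty$ for every $x\ge 0$ and, via Fenchel--Moreau, the biduality $\varphi^{**}_{\omega}=\varphi_{\omega}$. At the outset I record that $\varphi^{*}_{\omega}$ is convex, nonnegative and nondecreasing with $\varphi^{*}_{\omega}(0)=0$.

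First I would verify that $\Omega$ is a weight matrix of class $(\mathcal{M}_{\on{sc}})$. Normalization $W^{[l]}_0=1\le W^{[l]}_1$ is immediate from $\varphi^{*}_{\omega}(0)=0$ and $\varphi^{*}_{\omega}\ge 0$; log-convexity of $W^{[l]}$ is exactly convexity of $j\mapsto\varphi^{*}_{\omega}(lj)$; and $(W^{[l]}_p)^{1/p}\to\infty$ amounts to $\varphi^{*}_{\omega}(x)/x\to\infty$, which follows by bounding $\varphi^{*}_{\omega}(x)\ge xy_0-\varphi_{\omega}(y_0)$ for arbitrary fixed $y_0$. Monotonicity $W^{[l]}\le W^{[l']}$ for $l\le l'$ holds because $s\mapsto\varphi^{*}_{\omega}(s)/s$ is nondecreasing, being the difference quotient from the origin of the convex function $\varphi^{*}_{\omega}$ with $\varphi^{*}_{\omega}(0)=0$, so that $\tfrac1l\varphi^{*}_{\omega}(lj)=j\,\varphi^{*}_{\omega}(lj)/(lj)$ increases with $l$.

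The absorption property \eqref{newexpabsorb} under \eqref{om1} rests on one dual estimate. Iterating the doubling \eqref{om1} yields, for each $h\ge 1$, constants $A,C$ with $\omega(ht)\le A\omega(t)+C$ for all $t\ge 0$, i.e.\ $\tfrac1A\varphi_{\omega}(y)\le\varphi_{\omega}(y-\log h)+\tfrac CA$. Conjugating both sides and using the two rules $(\tfrac1A\varphi_{\omega})^{*}(x)=\tfrac1A\varphi^{*}_{\omega}(Ax)$ and $(\varphi_{\omega}(\cdot-\log h))^{*}(x)=\varphi^{*}_{\omega}(x)+x\log h$, the order-reversing property of the conjugate gives
\[
\tfrac1A\varphi^{*}_{\omega}(Ax)\ge\varphi^{*}_{\omega}(x)+x\log h-\tfrac CA,\qquad x\ge 0.
\]
Specializing $x=lj$, dividing by $l$ and exponentiating turns this into $h^jW^{[l]}_j\le D\,W^{[Al]}_j$ with $D:=\exp(C/(Al))$; here $A$ depends only on $h$ while $D$ absorbs the residual $l$-dependence, matching the quantifiers of \eqref{newexpabsorb}.

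For the non-quasianalyticity and moderate-growth statements I would pass through the associated function of $W^{[l]}$. A discretization argument (the discrete supremum defining $\omega_{W^{[l]}}$ differs from its continuous counterpart only by a term linear in $\log t$) together with $\varphi^{**}_{\omega}=\varphi_{\omega}$ shows that $\omega$ and $l\,\omega_{W^{[l]}}$ are equivalent weight functions for every $l$. Since \eqref{omnq} is invariant under equivalence and the classical criterion identifies non-quasianalyticity of $W^{[l]}\in\mathcal{LC}$ with $\int_1^{\infty}\omega_{W^{[l]}}(t)t^{-2}\,dt<\infty$, non-quasianalyticity transfers between $\omega$ and each (hence every) $W^{[l]}$. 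Likewise \Cref{assofuncproper}(iii) gives that $W^{[l]}$ has \eqref{mg} iff $\omega_{W^{[l]}}$ satisfies \eqref{om6}, which by the same equivalence is iff $\omega$ does. To close the loop with equivalence of all the $W^{[l]}$, I would dualize \eqref{om6} to the doubling $\varphi^{*}_{\omega}(x)\le 2\varphi^{*}_{\omega}(x/2)+x\log H+H$; iterating this comparison bounds $\tfrac1l\varphi^{*}_{\omega}(lj)$ against $\tfrac1{l'}\varphi^{*}_{\omega}(l'j)$ up to a term linear in $j$, which after taking $p$-th roots shows $W^{[l]}$ and $W^{[l']}$ are equivalent, while the converse recovers \eqref{om6} from the doubling implicit in the equivalence of $W^{[1]}$ and $W^{[2]}$. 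The main obstacle I expect is the discretization estimate relating $\omega_{W^{[l]}}$ to $\tfrac1l\omega$ and keeping the constants in the $(\omega_6)$-triangle uniform in $l$; the rest is a systematic application of Fenchel duality.
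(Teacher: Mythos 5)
The paper gives no proof of this lemma at all --- it is quoted from \cite[Section 5]{compositionpaper} --- and your argument is essentially the standard one found in that reference: everything is routed through Young-conjugate duality for $\varphi_{\omega}$. Your verification of the \hyperlink{Msc}{$(\mathcal{M}_{\on{sc}})$} properties (normalization, log-convexity, $(W^{[l]}_p)^{1/p}\to\infty$, monotonicity in $l$ via the difference quotient of $\varphi^{*}_{\omega}$), your derivation of \eqref{newexpabsorb} from the iterated form of \eqref{om1} (note that the translation formula for the conjugate uses normalization of $\omega$ to keep the supremum over $y\ge 0$), and the transfer of non-quasianalyticity through the equivalence of $\omega$ and $l\,\omega_{W^{[l]}}$ are all correct. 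The one step to firm up is the converse of the final assertion: equivalence of $W^{[1]}$ and $W^{[2]}$ only gives the dual doubling inequality $\tfrac12\varphi^{*}_{\omega}(2j)\le j\log c+\varphi^{*}_{\omega}(j)$ at integer arguments, and interpolating to real $x$ before conjugating back costs an increment $\varphi^{*}_{\omega}(x+1)-\varphi^{*}_{\omega}(x)$ which is not uniformly bounded --- exactly the obstacle you flagged. It can be bypassed: convexity of $\varphi^{*}_{\omega}$ gives $W^{[l]}_{j+k}\le W^{[2l]}_{j}W^{[2l]}_{k}$ for all $l,j,k$, so equivalence of $W^{[l]}$ and $W^{[2l]}$ immediately yields moderate growth of $W^{[l]}$, and then \Cref{assofuncproper}(iii) combined with the equivalence of $\omega$ and $l\,\omega_{W^{[l]}}$ returns \eqref{om6} for $\omega$, closing the circle of implications without any discretization estimate in that direction.
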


\section{Ultraholomorphic function classes and the Borel map} \label{ultraspaces}

We recall definitions and basic facts on ultraholomorphic classes; cf.\ \cite[Section 2.5]{sectorialextensions1}, \cite[Section 2.7]{mixedsectorialextensions},
\cite{Schmetsvaldivia00}, \cite{Thilliezdivision}, and references therein.

\subsection{Sectors}
Let $\mathcal{R}$ be the Riemann surface of the logarithm.
We wish to work in general unbounded open sectors in $\mathcal{R}$ with vertex at $0$, but all our results will be unchanged under rotation.
So it suffices to consider unbounded open sectors
$$S_{\gamma}:=\Big\{z\in\mathcal{R}: |\arg(z)|<\frac{\gamma\pi}{2}\Big\}, \quad \gamma >0,$$
of opening $\gamma\pi$
bisected by the positive real axis; we refer to them simply as \emph{sectors}.

\subsection{Ultraholomorphic classes associated with a weight sequence}
Let $M$ be a weight sequence, $S$ a sector, and $h>0$.
We consider the Banach space
$$\mathcal{A}_{M,h}(S):=\Big\{f\in\mathcal{H}(S): \sup_{z\in S, p\in\NN}\frac{|f^{(p)}(z)|}{h^p M_p}<\infty\Big\},$$
where $\mathcal{H}(S)$ is the space of holomorphic functions on $S$.
We define the spaces
\begin{equation*}
\mathcal{A}_{(M)}(S):=\bigcap_{h>0}\mathcal{A}_{M,h}(S) \quad \text{ and  } \quad \mathcal{A}_{\{M\}}(S):=\bigcup_{h>0}\mathcal{A}_{M,h}(S)
\end{equation*}
and equip them with their natural locally convex topologies. The Fr\'echet space $\mathcal{A}_{(M)}(S)$ is the \emph{ultraholomorphic class of Beurling type},
the (LB) space $\mathcal{A}_{\{M\}}(S)$ the \emph{ultraholomorphic class of Roumieu type} associated with $M$ in the sector $S$.

Analogously we introduce the sequence spaces
\begin{align*}
	\Lambda_{M,h} &:=\Big\{a=(a_p) \in\CC^{\NN}: \sup_{p\in\NN}\frac{|a_p|}{h^{p} M_{p}}<\infty\Big\},
	\\
	\Lambda_{(M)} &:=\bigcap_{h>0}\Lambda_{M,h},
	\quad \text{ and } \quad
	\Lambda_{\{M\}} :=\bigcup_{h>0}\Lambda_{M,h}.
\end{align*}
We have the (asymptotic) \emph{Borel maps} $\mathcal{B}:\mathcal{A}_{(M)}(S)\to \Lambda_{(M)}$ and $\mathcal{B}:\mathcal{A}_{\{M\}}(S)\to \Lambda_{\{M\}}$ given by
$f \mapsto(f^{(p)}(0))_{p\in\NN}$,
where $f^{(p)}(0):=\lim_{z\in S, z\rightarrow 0}f^{(p)}(z)$.

\subsection{Ultraholomorphic classes associated with a weight function}
Let $\omega$ be a normalized weight function satisfying \eqref{om3}. 
For a sector $S$ and $l>0$, we have the Banach space
$$\mathcal{A}_{\omega,l}(S) := \Big\{f\in\mathcal{H}(S) : \sup_{z\in S, p\in\NN}\frac{|f^{(p)}(z)|}{\exp(\frac{1}{l}\varphi^{*}_{\omega}(lp))}<\infty\Big\}.$$
We define the spaces
\begin{equation*}
\mathcal{A}_{(\omega)}(S):=\bigcap_{l>0}\mathcal{A}_{\omega,l}(S) \quad \text{ and  } \quad \mathcal{A}_{\{\omega\}}(S):=\bigcup_{l>0}\mathcal{A}_{\omega,l}(S)
\end{equation*}
and equip them with their natural locally convex topologies. The Fr\'echet space $\mathcal{A}_{(\omega)}(S)$ is the \emph{ultraholomorphic class of Beurling type},
the (LB) space $\mathcal{A}_{\{\omega\}}(S)$ the \emph{ultraholomorphic class of Roumieu type} associated with $\omega$ in the sector $S$.

Correspondingly, we have the sequence spaces
\begin{align*}
	\Lambda_{\omega,l} &:= \Big\{a=(a_p) \in\CC^{\NN}: \sup_{p\in\NN}\frac{|a_p|}{\exp(\frac{1}{l}\varphi^{*}_{\omega}(lp))}<\infty\Big\},
	\\
	\Lambda_{(\omega)} &:=\bigcap_{l>0}\Lambda_{\omega,l},
	\quad \text{ and } \quad
	\Lambda_{\{\omega\}} :=\bigcup_{l>0}\Lambda_{\omega,l}.	
\end{align*}
We get the \emph{Borel maps} $\mathcal{B}:\mathcal{A}_{(\omega)}(S)\to \Lambda_{(\omega)}$ and $\mathcal{B}:\mathcal{A}_{\{\omega\}}(S)\to \Lambda_{\{\omega\}}$.

For a weight matrix $\mathcal{M}=\{M^{[x]} : x>0\}$ and a sector $S$ we define ultraholomorphic classes of \emph{Beurling} and \emph{Roumieu type}
\begin{equation*}
\mathcal{A}_{(\mathcal{M})}(S):=\bigcap_{x>0}\mathcal{A}_{(M^{[x]})}(S), \quad \mathcal{A}_{\{\mathcal{M}\}}(S):=\bigcup_{x>0}\mathcal{A}_{\{M^{[x]}\}}(S),
\end{equation*}
as well as sequence spaces
\[	
	\Lambda_{(\mathcal{M})}:=\bigcap_{x>0}\Lambda_{(M^{[x]})}, \quad \Lambda_{\{\mathcal{M}\}}:=\bigcup_{x>0}\Lambda_{\{M^{[x]}\}},
\]
and equip them with their natural locally convex topologies. Clearly, we have the associated Borel maps.

\begin{proposition} \label{prop:topiso}
	Let $\omega\in\hyperlink{omset1}{\mathcal{W}}$ and let $\Omega$ be the associated weight matrix. Then 	
	\begin{equation*}
	\mathcal{A}_{(\omega)}(S)=\mathcal{A}_{(\Omega)}(S), \quad \mathcal{A}_{\{\omega\}}(S)=\mathcal{A}_{\{\Omega\}}(S),\quad
	\Lambda_{(\omega)}=\Lambda_{(\Omega)}, \quad \Lambda_{\{\omega\}}=\Lambda_{\{\Omega\}},
	\end{equation*}
	as locally convex vector spaces.
\end{proposition}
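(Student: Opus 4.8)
The plan is to show that the two pairs of spaces coincide by establishing, for each fixed $l>0$, mutual continuous inclusions between the Banach building blocks $\mathcal{A}_{\omega,l}(S)$ and the spaces $\mathcal{A}_{(M^{[x]})}(S)$, $\mathcal{A}_{\{M^{[x]}\}}(S)$, and then passing to the appropriate projective and inductive limits. The key observation, which is essentially the definition of the associated weight matrix $\Omega=\{W^{[l]}:l>0\}$ in \Cref{omegaproperties}, is that
\begin{equation*}
W^{[l]}_p=\exp\Big(\tfrac{1}{l}\varphi^{*}_{\omega}(lp)\Big),
\end{equation*}
so that the weight $\exp(\frac{1}{l}\varphi^{*}_{\omega}(lp))$ appearing in the norm of $\mathcal{A}_{\omega,l}(S)$ is literally $W^{[l]}_p$. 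Consequently $\mathcal{A}_{\omega,l}(S)=\mathcal{A}_{W^{[l]},1}(S)$ isometrically, and likewise $\Lambda_{\omega,l}=\Lambda_{W^{[l]},1}$. This reduces the whole proposition to comparing the families $\{\mathcal{A}_{W^{[l]},1}(S)\}_{l>0}$ with the two-parameter families $\{\mathcal{A}_{M^{[x]},h}(S)\}_{x,h>0}$ and taking limits, so no genuinely new estimate is needed beyond keeping track of the parameters.

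First I would treat the Roumieu case. The inclusion $\mathcal{A}_{\{\Omega\}}(S)\subseteq\mathcal{A}_{\{\omega\}}(S)$ is immediate: if $f\in\mathcal{A}_{\{M^{[x]}\}}(S)$ for some $x$, then $|f^{(p)}(z)|\le C\,h^p M^{[x]}_p$, and since $M^{[x]}=W^{[x]}$ by the identification above, this is exactly a bound in $\mathcal{A}_{\omega,l}(S)$ for a suitable $l$ once the factor $h^p$ is absorbed. The reverse inclusion and, crucially, the absorption of the constant $h^p$ is where the hypothesis $\omega\in\hyperlink{omset1}{\mathcal{W}}$ (i.e.\ the extra condition \eqref{om1}) enters: the key tool is the estimate \eqref{newexpabsorb}, which says that for every $h\ge1$ there is $A\ge1$ such that $h^j W^{[l]}_j\le D\,W^{[Al]}_j$, allowing one to swallow any geometric factor $h^p$ at the cost of increasing the parameter $l$ to $Al$. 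This makes the two inductive families cofinal in each other, giving equality of the Roumieu spaces as sets; the same argument applied verbatim to sequences yields $\Lambda_{\{\omega\}}=\Lambda_{\{\Omega\}}$.

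For the Beurling case the argument is dual. Here the spaces are projective limits $\bigcap_{l>0}$ resp.\ $\bigcap_{x>0}$, and the identification $\mathcal{A}_{\omega,l}(S)=\mathcal{A}_{W^{[l]},1}(S)$ together with \eqref{newexpabsorb} again shows that belonging to \emph{all} the $\mathcal{A}_{(M^{[x]})}(S)$ (i.e.\ to $\mathcal{A}_{M^{[x]},h}(S)$ for every $x$ and every $h>0$) is equivalent to belonging to $\mathcal{A}_{\omega,l}(S)$ for every $l>0$: given $l$, one finds $x$ and $h$ such that membership in $\mathcal{A}_{M^{[x]},h}(S)$ forces the bound defining $\mathcal{A}_{\omega,l}(S)$, and conversely, using \eqref{newexpabsorb} to trade the required small $h$ against a larger $l$-parameter. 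Thus the intersections agree set-theoretically, and the sequence-space statement $\Lambda_{(\omega)}=\Lambda_{(\Omega)}$ follows identically.

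Finally, equality as locally convex spaces: having matched the defining Banach seminorms up to the cofinality/absorption just described, the locally convex topologies — projective in the Beurling case, inductive (LB) in the Roumieu case — are generated by cofinal subfamilies of mutually continuously embedded Banach spaces, hence coincide. I expect the main obstacle to be purely bookkeeping rather than conceptual: one must be careful that the parameter shift $l\mapsto Al$ supplied by \eqref{newexpabsorb} is uniform in the right quantifier order (note the nesting $\forall h\;\exists A\;\forall l\;\exists D$), so that the exchanged families are genuinely cofinal and the maps between limits are continuous in both directions. There is no subtlety in holomorphy or in the sector $S$, since every estimate is pointwise in $z$ and the passage from function spaces to sequence spaces is formally identical.
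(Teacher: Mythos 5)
Your proposal is correct and follows essentially the same route as the paper, whose proof is precisely the one-line observation that $\mathcal{A}_{\omega,l}(S)=\mathcal{A}_{W^{[l]},1}(S)$ by the definition of the seminorms together with the absorption estimate \eqref{newexpabsorb} (quoted from \cite[Lemma 5.9]{compositionpaper}) to make the two parameter families cofinal. One tiny presentational slip: in the Roumieu paragraph the direction $\mathcal{A}_{\{\omega\}}(S)\subseteq\mathcal{A}_{\{\Omega\}}(S)$ is the trivial one (take $h=1$), while it is $\mathcal{A}_{\{\Omega\}}(S)\subseteq\mathcal{A}_{\{\omega\}}(S)$ that needs \eqref{newexpabsorb}; since you invoke the absorption in the right place anyway, the argument is unaffected.
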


\begin{proof}
	This is a consequence of \cite[Lemma 5.9, (5.10)]{compositionpaper} (see \eqref{newexpabsorb}) and the way how the seminorms are defined in these spaces.
\end{proof}

\subsection{Inclusion relations} \label{sec:inclusion}
As an immediate consequence of the definitions we get the following inclusion relations (on any sector):
\begin{itemize}
	\item $\sup_{p\in\NN_{>0}}\big(\frac{M_p}{N_p}\big)^{1/p}<\infty$ 
	implies the inclusions $\mathcal{A}_{(M)}\subseteq\mathcal{A}_{(N)}$, $\mathcal{A}_{\{M\}}\subseteq\mathcal{A}_{\{N\}}$,
		$\Lambda_{(M)}\subseteq\Lambda_{(N)}$, and $\Lambda_{\{M\}}\subseteq\Lambda_{\{N\}}$.
	\item $\big(\frac{M_p}{N_p}\big)^{1/p} \to 0$, abbreviated by $M\hypertarget{mvartrian}{\vartriangleleft}N$,
	implies $\mathcal{A}_{\{M\}}\subseteq\mathcal{A}_{(N)}$ and $\Lambda_{\{M\}}\subseteq\Lambda_{(N)}$.
	\item $\tau(t)=O(\sigma(t))$ as $t \to \infty$ (i.e.\ $\sigma \preceq \tau$) implies $\mathcal{A}_{(\sigma)}\subseteq\mathcal{A}_{(\tau)}$, $\mathcal{A}_{\{\sigma\}}\subseteq\mathcal{A}_{\{\tau\}}$,
		$\Lambda_{(\sigma)}\subseteq\Lambda_{(\tau)}$, and $\Lambda_{\{\sigma\}}\subseteq\Lambda_{\{\tau\}}$.
	\item $\tau(t)=o(\sigma(t))$ as $t \to \infty$ implies	$\mathcal{A}_{\{\sigma\}}\subseteq\mathcal{A}_{(\tau)}$ and $\Lambda_{\{\sigma\}}\subseteq\Lambda_{(\tau)}$.
\end{itemize}
Obviously, we have $\mathcal{A}_{\ast}(S_r)\subseteq\mathcal{A}_{\ast}(S_{r'})$ for any $0<r'\le r$, where $*$ refers to any of the specified regularity classes.
All listed inclusions are continuous.

\section{Ultraholomorphic sectorial extensions}\label{mixedultraholomBeur}

In this section we prove the main \Cref{Thm2ultraholombeurling,Thm6ultraholombeurling}.
The proof of \Cref{Thm2ultraholombeurling} is based on \Cref{lemma13generalizationultraholom} which allows us to reduce the Beurling case to the
Roumieu case (treated in \cite{mixedsectorialextensions} and recalled in \Cref{Thm2ultraholomrecall,Cor1ultraholomrecall}).
\Cref{Thm6ultraholombeurling} is a corollary of \Cref{Thm2ultraholombeurling}.
We work with two weight functions $\omega$ and $\sigma$ allowing for a controlled loss of regularity in the extension procedure.
This generalizes the case $\omega=\sigma$ treated in \cite[Section 7]{sectorialextensions1}.

In the following, by an \emph{extension operator} we mean a continuous linear right-inverse of the Borel map
$\mathcal B$; the domain and codomain will be clear from the context.

\subsection{Notation for associated weights} \label{sec:notation}
\begin{itemize}
	\item For any weight function $\omega\in\hyperlink{omset0}{\mathcal{W}_0}$ we denote by $\mathcal{M}(\omega)$ the weight matrix $\{W^{[l]} : l >0\}$
	associated with $\omega$ in \Cref{omegaproperties}.
	\item For any weight matrix $\mathcal{M} = \{M^{[x]} : x>0\}$ consider the weight matrix $\widehat{\mathcal{M}} = \{\widehat{M}^{[x]} : x>0\}$,
	where $\widehat{M}^{[x]}_p := p! M^{[x]}_p$ for all $p \in \NN$.
	\item For a weight matrix $\mathcal{M}=\{M^{[x]} : x>0\}$ satisfying \hyperlink{Msc}{$(\mathcal{M}_{\on{sc}})$}
	 set $\omega(\widehat{\mathcal{M}}) := \omega_{\widehat{M}^{[1]}}$.
\end{itemize}
Then $\omega(\widehat{\mathcal{M}}) \in \hyperlink{omset1}{\mathcal{W}}$
and for all $\tau \in  \hyperlink{omset1}{\mathcal{W}}$ equivalent to $\omega(\widehat{\mathcal{M}})$ we have topological isomorphisms
$\mathcal{A}_{\{\tau\}}(S) \cong \mathcal{A}_{\{\widehat{\mathcal M}\}}(S)$ and
$\mathcal{A}_{(\tau)}(S) \cong \mathcal{A}_{(\widehat{\mathcal M})}(S)$
for all sectors $S$, analogously
$\Lambda_{\{\tau\}} \cong \Lambda_{\{\widehat{\mathcal M}\}}$ and
$\Lambda_{(\tau)} = \Lambda_{(\widehat{\mathcal M})}$;
see \cite[Theorem 5.3]{sectorialextensions} and \cite[Theorem 6.7]{sectorialextensions1}
(the proof is based on \cite[Corollary 3.17 $(ii)\Rightarrow(i)$]{testfunctioncharacterization}).
We point out that $\omega_{\widehat{M}^{[x]}}$ is equivalent to $\omega(\widehat{\mathcal{M}})$ for all $x>0$;
see \cite[Lemma 5.1, Cor. 5.2, Thm. 5.3]{sectorialextensions}.

Note that for $\omega(\mathcal{M}) := \omega_{M^{[1]}}$ condition \eqref{om1} and
the above properties might fail.

\subsection{Extensions of Beurling type controlled by the mixed growth index}
The main goal of this section is to prove the following theorem.
Recall that, for $\omega \in\hyperlink{omset1}{\mathcal{W}}$,
$\gamma(\sigma,\omega)>0$ if and only if $\sigma\hyperlink{ompreceq}{\preceq}\omega$ (see \Cref{sec:MGI}).

\begin{theorem}\label{Thm2ultraholombeurling}
Let $\omega,\sigma\in\hyperlink{omset1}{\mathcal{W}}$ and $0<\gamma<\gamma(\sigma,\omega)$.
Consider $\tau_1 := \omega(\widehat{\mathcal{M}(\sigma)})$ and $\tau_2 := \omega(\widehat{\mathcal{M}(\omega)})$.
Then:
\begin{itemize}
\item[(i)] We have the inclusion
$\mathcal{B}(\mathcal{A}_{(\tau_2)}(S_{\gamma}))\supseteq\Lambda_{(\tau_1)}$.
\item[(ii)]
If $\tau\in\hyperlink{omset1}{\mathcal{W}}$ satisfies $\sigma(t)=o(\tau(t))$ as $t\rightarrow\infty$,
then there exists an extension operator
$$\mathcal E^{\tau_3,\tau_2}: \Lambda_{\{\tau_3\}}\to \mathcal{A}_{(\tau_2)}(S_{\gamma}),$$
where $\tau_3 := \omega(\widehat{\mathcal{M}(\tau)})$.
\end{itemize}
\end{theorem}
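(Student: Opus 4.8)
The plan is to prove part (ii) first, by reducing it to the Roumieu extension result \Cref{Cor1ultraholomrecall} via the reduction lemma \Cref{lemma13generalizationultraholom}, and then to deduce part (i) from (ii) by a diagonalization that realizes the Beurling space as a union of faster Roumieu spaces.

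For (ii) the given weight $\tau$ serves as the auxiliary function $f$ of \Cref{lemma13generalizationultraholom}. Since that lemma requires mixed index $>1$, whereas here $\gamma(\sigma,\omega)$ is only $>\gamma$, I would first ramify: set $\bar\sigma:=\sigma^{\gamma}$, $\bar\omega:=\omega^{\gamma}$, $\bar\tau:=\tau^{\gamma}$, which again lie in $\mathcal{W}$, satisfy $\bar\sigma(t)=o(\bar\tau(t))$, and, by \eqref{indextrafo}, fulfil $\gamma(\bar\sigma,\bar\omega)=\gamma(\sigma,\omega)/\gamma>1$. Applying \Cref{lemma13generalizationultraholom} to $(\bar\sigma,\bar\omega)$ with $f=\bar\tau$ and then un-ramifying (using \eqref{indextrafo} once more) produces $\tilde\sigma,\tilde\omega\in\mathcal{W}$ with $\gamma(\tilde\sigma,\tilde\omega)>\gamma$ together with
\[
\omega(t)=o(\tilde\omega(t)),\qquad \sigma(t)=o(\tilde\sigma(t)),\qquad \tilde\sigma(t)=o(\tau(t)).
\]
As $\gamma<\gamma(\tilde\sigma,\tilde\omega)$, \Cref{Cor1ultraholomrecall} applies to $(\tilde\sigma,\tilde\omega)$ and, with $\tilde\tau_1:=\omega(\widehat{\mathcal{M}(\tilde\sigma)})$ and $\tilde\tau_2:=\omega(\widehat{\mathcal{M}(\tilde\omega)})$, yields for $l=1$ some $l_1>0$ and an extension operator $\mathcal{E}_1:\Lambda_{\tilde\tau_1,1}\to\mathcal{A}_{\tilde\tau_2,l_1}(S_{\gamma})$. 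It then remains to sandwich $\mathcal{E}_1$ between continuous inclusions. Since the assignment $\rho\mapsto\omega(\widehat{\mathcal{M}(\rho)})$ is monotone and turns the relation $o$ into $o$, the relation $\tilde\sigma=o(\tau)$ gives $\tilde\tau_1=o(\tau_3)$ and $\omega=o(\tilde\omega)$ gives $\tau_2=o(\tilde\tau_2)$; hence, by the inclusion relations of \Cref{sec:inclusion}, the maps $\Lambda_{\{\tau_3\}}\hookrightarrow\Lambda_{(\tilde\tau_1)}\hookrightarrow\Lambda_{\tilde\tau_1,1}$ and $\mathcal{A}_{\tilde\tau_2,l_1}(S_{\gamma})\hookrightarrow\mathcal{A}_{\{\tilde\tau_2\}}(S_{\gamma})\hookrightarrow\mathcal{A}_{(\tau_2)}(S_{\gamma})$ are continuous. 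The composite $\mathcal{E}^{\tau_3,\tau_2}$ of these inclusions with $\mathcal{E}_1$ is continuous and linear, and it is an extension operator because $\mathcal{E}_1$ is a right inverse of the Borel map and the Borel maps on all the spaces in question coincide, being given by $f\mapsto(f^{(p)}(0))_{p}$.

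For (i), part (ii) already shows $\Lambda_{\{\tau_3\}}\subseteq\mathcal{B}(\mathcal{A}_{(\tau_2)}(S_{\gamma}))$ for every $\tau\in\mathcal{W}$ with $\sigma=o(\tau)$, so it suffices to prove
\[
\Lambda_{(\tau_1)}\subseteq\bigcup_{\tau\in\mathcal{W},\,\sigma=o(\tau)}\Lambda_{\{\tau_3\}},
\]
the reverse inclusion being immediate: $\sigma=o(\tau)$ forces $\tau_1=o(\tau_3)$, whence $\Lambda_{\{\tau_3\}}\subseteq\Lambda_{(\tau_1)}$ by \Cref{sec:inclusion}. Thus, given $a\in\Lambda_{(\tau_1)}=\Lambda_{(\widehat{\mathcal{M}(\sigma)})}$, I would construct, by a diagonal argument on the defining estimates, a weight $\tau\in\mathcal{W}$ that grows strictly faster than $\sigma$ but slowly enough that a single Roumieu estimate for $\widehat{\mathcal{M}(\tau)}$ still captures $a$; applying (ii) to this $\tau$ then produces $f:=\mathcal{E}^{\tau_3,\tau_2}(a)\in\mathcal{A}_{(\tau_2)}(S_{\gamma})$ with $\mathcal{B}(f)=a$.

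The ramification bookkeeping and the continuity of the sandwiching inclusions are routine. I expect the real difficulties to be twofold. First, one must verify that $\rho\mapsto\omega(\widehat{\mathcal{M}(\rho)})$ preserves the \emph{strict} relation $o$, so that $\sigma=o(\tau)$ genuinely passes to $\tau_1=o(\tau_3)$ (and $\omega=o(\tilde\omega)$ to $\tau_2=o(\tilde\tau_2)$); this is where the definition of the associated function of the shifted weight matrix and \Cref{omegaproperties} enter, and mere domination $\preceq$ would not suffice. Second, and most substantially, there is the diagonalization behind (i): recapturing a given Beurling sequence inside one Roumieu class while respecting the constraint $\sigma=o(\tau)$. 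This is precisely where the Beurling case is harder than the Roumieu one, because no single operator from \Cref{Cor1ultraholomrecall} can land in the projective limit $\mathcal{A}_{(\tau_2)}(S_{\gamma})$ unless the target weight is first made strictly faster — which is exactly the effect of the strict relation $\omega=o(\tilde\omega)$ supplied by the reduction lemma.
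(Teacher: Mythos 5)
Your part (ii) follows essentially the paper's route: ramify so that the mixed index exceeds $1$, apply \Cref{lemma13generalizationultraholom} with $f=\tau$ (ramified), unramify, invoke the Roumieu result, and sandwich with continuous inclusions. One caveat: you reduce the sandwiching to the claim that $\rho\mapsto\omega(\widehat{\mathcal{M}(\rho)})$ turns $o$ into $o$, which you correctly flag as needing proof but do not supply; the paper avoids this entirely by working at the level of weight matrices, using \cite[Lemma 5.16]{compositionpaper} to convert $\omega(t)=o(\tilde\omega(t))$ into the matrix inequality $(\widetilde W^{1/r})^{[x]}\le C\,W^{[Hx]}$ for all $H>0$, which directly yields the continuous inclusion $\mathcal{A}_{\{\widehat{\widetilde\Omega^{1/r}}\}}(S_r)\hookrightarrow\mathcal{A}_{(\widehat\Omega)}(S_r)$ without ever comparing the associated weight functions $\tau_2$ and $\tilde\tau_2$. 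You should adopt that device rather than try to prove the $o$-preservation claim.

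The genuine gap is in part (i). Your plan is to deduce (i) from (ii) by exhibiting, for each $a\in\Lambda_{(\tau_1)}$, a weight function $\tau\in\hyperlink{omset1}{\mathcal{W}}$ with $\sigma(t)=o(\tau(t))$ and $a\in\Lambda_{\{\tau_3\}}$ --- but this diagonalization is exactly the heart of the Beurling case and you leave it entirely unexecuted. Moreover, your formulation makes it harder than necessary: to feed the majorant into (ii) it would have to be a genuine element of $\hyperlink{omset1}{\mathcal{W}}$ (normalized, satisfying \eqref{om1}, \eqref{om3}, \eqref{om4}), and producing such a $\tau$ adapted to an arbitrary Beurling sequence is nontrivial. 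The paper's mechanism is different and crucially lighter: from $\check a=(a_p/p!)\in\Lambda_{(\sigma)}$ one forms $g(t)=\log\max\{1,|\check a_p|\}$, applies the convex-regularization lemma \cite[Lemma 4.3]{BonetBraunMeiseTaylorWhitneyextension} to get a convex $h$ with $g\le h\le\inf_j(j\varphi^*_\sigma(\cdot/j)+D_j)$, and sets $f(t)=h^*(\max\{0,\log t\})$, which satisfies $\sigma(t)=o(f(t))$ but need \emph{not} be a weight function at all. \Cref{lemma13generalizationultraholom} accepts an arbitrary such $f$ and interpolates a genuine weight $\tilde\sigma\in\hyperlink{omset1}{\mathcal{W}}$ between $\sigma$ and $f$; the inequality $\tilde\sigma^{1/r}\le f+B$ together with $h=h^{**}$ then gives $g\le\varphi^*_{\tilde\sigma^{1/r}}+B$, i.e.\ $\check a$ lies in the single Roumieu class $\Lambda_{\{\tilde\sigma^{1/r}\}}$, to which \Cref{Thm2ultraholomrecall} applies. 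Without this construction (or an equivalent one), your proof of (i) is incomplete: the statement that the Beurling space is exhausted by the Roumieu spaces $\Lambda_{\{\tau_3\}}$ over all admissible $\tau\in\hyperlink{omset1}{\mathcal{W}}$ is precisely what remains to be proved, and it does not follow from anything you have written.
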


The proof is based on a reduction to the Roumieu case which will be now recalled.

\subsection{Extensions of Roumieu type controlled by the mixed growth index}

\begin{theorem}[{\cite[Theorem 2]{mixedsectorialextensions}}]\label{Thm2ultraholomrecall}
Let $\sigma$ and $\omega$ be normalized weight functions satisfying \eqref{om3}.
Assume that $\gamma(\sigma,\omega)>0$ and let $0<\gamma<\gamma(\sigma,\omega)$.
Consider the weight matrices $\Sigma=\{S^{[x]}: x>0\} := \mathcal{M}(\sigma)$, $\Omega=\{W^{[x]}: x>0\} := \mathcal{M}(\omega)$, as well as $\widehat{\Sigma}$ and
$\widehat{\Omega}$.
Then there exists a constant $k_0>0$ such that for every $x>0$ and every $h>0$ we have an extension operator
$$
\mathcal E^{\sigma,\omega}_h: \Lambda_{\widehat{S}^{[x]},h}\to \mathcal{A}_{\widehat{W}^{[8x]},k_0h}(S_{\gamma}).
$$
Consequently, we have the inclusion
$\mathcal{B}(\mathcal{A}_{\{\widehat{\Omega}\}}(S_{\gamma}))\supseteq\Lambda_{\{\widehat{\Sigma}\}}$.
\end{theorem}

\begin{theorem}[{\cite[Corollary 1]{mixedsectorialextensions}}]\label{Cor1ultraholomrecall}
Let $\sigma, \omega\in\hyperlink{omset1}{\mathcal{W}}$ 
and $0<\gamma<\gamma(\sigma,\omega)$.
Consider $\tau_1 := \omega(\widehat{\mathcal{M}(\sigma)})$ and $\tau_2 := \omega(\widehat{\mathcal{M}(\omega)})$.
Then for every $l>0$ there exist $l_1>0$ and an extension operator
\begin{equation*} 
\mathcal E^{\tau_1,\tau_2}_{l} : \Lambda_{\tau_1,l}\to  \mathcal{A}_{\tau_2,l_1}(S_{\gamma}).
\end{equation*}
In particular, we have $\mathcal{B}(\mathcal{A}_{\{\tau_2\}}(S_{\gamma}))\supseteq\Lambda_{\{\tau_1\}}$.
\end{theorem}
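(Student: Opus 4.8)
The plan is to transport the problem into the weight-sequence (matrix) picture, where \Cref{Thm2ultraholomrecall} is available, apply that theorem, and transport the result back. The bridge is the identity
\[
\Lambda_{\rho,l}=\Lambda_{\mathcal{M}(\rho)^{[l]},1},\qquad \mathcal{A}_{\rho,l}(S)=\mathcal{A}_{\mathcal{M}(\rho)^{[l]},1}(S)\qquad(\rho\in\mathcal{W}_0),
\]
which is immediate from the definitions, since $\mathcal{M}(\rho)=\{W^{[l]}\}$ with $W^{[l]}_j=\exp(\tfrac1l\varphi^{*}_{\rho}(lj))$ by \Cref{omegaproperties}. Writing $T:=\mathcal{M}(\tau_1)$ and $U:=\mathcal{M}(\tau_2)$, it suffices, given $l>0$, to exhibit $x,h>0$ and $l_1>0$ together with continuous inclusions
\[
\Lambda_{T^{[l]},1}\hookrightarrow\Lambda_{\widehat S^{[x]},h},\qquad \mathcal{A}_{\widehat W^{[8x]},k_0h}(S_\gamma)\hookrightarrow\mathcal{A}_{U^{[l_1]},1}(S_\gamma),
\]
and to insert the operator $\mathcal{E}^{\sigma,\omega}_h$ of \Cref{Thm2ultraholomrecall} between them. (The hypotheses of \Cref{Thm2ultraholomrecall} are met: $0<\gamma<\gamma(\sigma,\omega)$ forces $\gamma(\sigma,\omega)>0$, and $\sigma,\omega\in\mathcal{W}$ are normalized and satisfy \eqref{om3}.)

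Both inclusions are matrix-comparison statements, and here is where I would spend the effort. Since $\sigma,\omega\in\mathcal{W}$, the functions $\tau_1=\omega(\widehat{\mathcal{M}(\sigma)})$ and $\tau_2=\omega(\widehat{\mathcal{M}(\omega)})$ lie in $\mathcal{W}$, so \Cref{prop:topiso} applies to them; combined with the topological identities $\mathcal{A}_{\{\tau_1\}}=\mathcal{A}_{\{\widehat\Sigma\}}$ and $\mathcal{A}_{\{\tau_2\}}=\mathcal{A}_{\{\widehat\Omega\}}$ recorded in \Cref{sec:notation} (from \cite[Thm 5.3]{sectorialextensions}, \cite[Thm 6.7]{sectorialextensions1}), this yields that the matrix $T$ defines the same Roumieu class as $\widehat\Sigma=\{\widehat S^{[x]}\}$ and $U$ the same as $\widehat\Omega=\{\widehat W^{[x]}\}$. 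At the level of the defining sequences this means: for every $l$ there is $x$ with $T^{[l]}\precsim\widehat S^{[x]}$, and for every index $y$ there is $l_0$ with $\widehat W^{[y]}\precsim U^{[l_0]}$. The source inclusion is now immediate: unravelling $T^{[l]}\precsim\widehat S^{[x]}$ as $T^{[l]}_p\le C\,h^p\,\widehat S^{[x]}_p$ and taking that very $h$ gives $\Lambda_{T^{[l]},1}\hookrightarrow\Lambda_{\widehat S^{[x]},h}$ and fixes the target Banach space $\mathcal{A}_{\widehat W^{[8x]},k_0h}(S_\gamma)$ of \Cref{Thm2ultraholomrecall}.

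The target inclusion is the delicate point. Unravelling $\widehat W^{[8x]}\precsim U^{[l_0]}$ only gives $\widehat W^{[8x]}_p\le C\,(h'')^p\,U^{[l_0]}_p$, so that $(k_0h)^p\,\widehat W^{[8x]}_p\le C\,(k_0hh'')^p\,U^{[l_0]}_p$, and the extraneous geometric factor $(k_0hh'')^p$ must be absorbed into the matrix index. This is exactly what the absorption property \eqref{newexpabsorb} of \Cref{omegaproperties} delivers---and it is available precisely because $\tau_2\in\mathcal{W}$ satisfies \eqref{om1}: setting $H:=k_0hh''$ (enlarged so that $H\ge1$), \eqref{newexpabsorb} furnishes $A\ge1$ and then $D\ge1$ with $H^pU^{[l_0]}_p\le D\,U^{[Al_0]}_p$, whence with $l_1:=Al_0$ we obtain $(k_0h)^p\,\widehat W^{[8x]}_p\le CD\,U^{[l_1]}_p$, i.e.\ $\mathcal{A}_{\widehat W^{[8x]},k_0h}(S_\gamma)\hookrightarrow\mathcal{A}_{U^{[l_1]},1}(S_\gamma)=\mathcal{A}_{\tau_2,l_1}(S_\gamma)$. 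I expect this absorption to be the only genuine obstacle; the shift by the factor $8$ in the weight-matrix index and the universal constant $k_0$ from \Cref{Thm2ultraholomrecall} are harmless, being swallowed by the choice of $l_1$.

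Finally I would set $\mathcal{E}^{\tau_1,\tau_2}_l$ to be the composite
\[
\Lambda_{\tau_1,l}=\Lambda_{T^{[l]},1}\hookrightarrow\Lambda_{\widehat S^{[x]},h}\xrightarrow{\ \mathcal{E}^{\sigma,\omega}_h\ }\mathcal{A}_{\widehat W^{[8x]},k_0h}(S_\gamma)\hookrightarrow\mathcal{A}_{\tau_2,l_1}(S_\gamma).
\]
It is linear and continuous as a composite of such maps, and it is a right inverse of $\mathcal{B}$: the two inclusions act as the identity on sequences, resp.\ functions, hence commute with the Borel map, while $\mathcal{B}\circ\mathcal{E}^{\sigma,\omega}_h=\id$ on $\Lambda_{\widehat S^{[x]},h}$ by \Cref{Thm2ultraholomrecall}. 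This proves the existence of the extension operator. The ``in particular'' then follows by taking the union over $l$: any $a\in\Lambda_{\{\tau_1\}}$ lies in some $\Lambda_{\tau_1,l}$, so $f:=\mathcal{E}^{\tau_1,\tau_2}_l(a)\in\mathcal{A}_{\tau_2,l_1}(S_\gamma)\subseteq\mathcal{A}_{\{\tau_2\}}(S_\gamma)$ satisfies $\mathcal{B}(f)=a$, giving $\Lambda_{\{\tau_1\}}\subseteq\mathcal{B}(\mathcal{A}_{\{\tau_2\}}(S_\gamma))$.
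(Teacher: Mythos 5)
The paper does not prove this statement at all: it is imported verbatim as \cite[Corollary 1]{mixedsectorialextensions}, so there is no internal proof to compare against. Your derivation of it from \Cref{Thm2ultraholomrecall} --- passing to the associated weight matrices, sandwiching the operator $\mathcal E^{\sigma,\omega}_h$ between step inclusions, and absorbing the geometric factor $(k_0hh'')^p$ via \eqref{newexpabsorb} using \eqref{om1} for $\tau_2$ --- is correct and is essentially the argument by which the corollary is deduced from Theorem 2 in the cited source.
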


\subsection{Reduction lemma}  \label{reductionlemma}

The reduction is based on the following variant of \cite[Lemma 13]{beurlingultradiffmixedsetting} 
(which in turn contains ideas from \cite[Lemma 4.4]{BonetBraunMeiseTaylorWhitneyextension}).
The proof simplifies significantly, because in contrast to \cite{beurlingultradiffmixedsetting}
we need not bother about concavity of the weights.

\begin{lemma} \label{lemma13generalizationultraholom}
Let $\omega,\sigma$ be (normalized) weight functions such that $\gamma(\sigma,\omega)>1$.
Let $f:[0,\infty)\rightarrow[0, \infty)$ satisfy $\sigma(t)=o(f(t))$ as $t\rightarrow\infty$.
Then there exist (normalized) weight functions $\widetilde{\omega},\widetilde{\sigma}$ satisfying $\gamma(\widetilde{\sigma},\widetilde{\omega})>1$ and
\begin{equation}\label{lemma13generalizationultraholomequ}
\omega(t)=o(\widetilde{\omega}(t)), \quad \sigma(t)=o(\widetilde{\sigma}(t)), \quad\widetilde{\sigma}(t)=o(f(t)) \qquad\text{ as }t\rightarrow\infty.
\end{equation}
If $\omega,\sigma\in\hyperlink{omset0}{\mathcal{W}_0}$ (resp.\ $\omega,\sigma\in\hyperlink{omset1}{\mathcal{W}}$),
then we may assume that also $\widetilde{\omega}, \widetilde{\sigma}\in\hyperlink{omset0}{\mathcal{W}_0}$ (resp.\ $\widetilde{\omega}, \widetilde{\sigma}\in\hyperlink{omset1}{\mathcal{W}}$).
\end{lemma}

\begin{proof}
By \cite[Proposition 7]{beurlingultradiffmixedsetting}, the condition $\gamma(\sigma,\omega)>1$ is equivalent to
\begin{equation}\label{equ41new}
\exists C>0\;\exists K>H>1\;\exists t_0\ge 0\;\forall t\ge t_0\;\forall j\in\NN_{>0} : \omega(K^{j}t)\le CH^{j}\sigma(t).
\end{equation}
We will construct weight functions $\widetilde{\omega}$ and $\widetilde{\sigma}$ satisfying \eqref{lemma13generalizationultraholomequ} and
\eqref{equ41new}, i.e., $\gamma(\widetilde{\sigma},\widetilde{\omega})>1$.

Note that $f(t)\rightarrow\infty$ as $t \to \infty$,
since $\sigma(t)\rightarrow\infty$ and $\sigma(t)=o(f(t))$  as $t \to \infty$.
We consider a strictly increasing sequence $(x_n)_{n\ge 1}$ tending to infinity, with $x_1:=0$, $x_2\ge 1$, and satisfying the following requirements for all $n\ge 2$:
\begin{align}{}
	\label{equ42new}
	 x_n &> \max\{2,K\} x_{n-1}+n,
	 \\
	 \label{equ43new}
	 f(t) &\ge n^2\sigma(t), \quad \text{ for all } t\ge x_n,
	 \\
	 \label{equ44new}
	 \omega(x_n) &\ge 2^{n-i}\omega(x_i), \quad \text{ for all } 1 \le i \le n-1,
	 \\
	 \label{equ45new}
	 \sigma(x_n) &\ge 2^{n-i}\sigma(x_i), \quad \text{ for all } 1 \le i \le n-1.
\end{align}
Then we define the weights $\widetilde{\omega}$ and $\widetilde{\sigma}$ as follows: for $n \ge 1$ and $t\in[x_n,x_{n+1})$ set
\begin{align*}
	\widetilde{\omega}(t) := n\omega(t)-\sum_{i=1}^{n}\omega(x_i) \quad \text{ and } \quad	
	\widetilde{\sigma}(t) := n\sigma(t)-\sum_{i=1}^{n}\sigma(x_i).
\end{align*}
By definition and since $x_2\ge 1$, $\widetilde{\omega}$ is normalized if $\omega$ is normalized;  analogously for $\widetilde{\sigma}$.
Moreover, both $\widetilde{\omega}$ and $\widetilde{\sigma}$ are non-decreasing, continuous, tending to infinity as $t\rightarrow\infty$, and vanish at $0$.
Note that $\widetilde{\omega}$ satisfies \eqref{om4} provided that $\omega$ does; similarly for $\widetilde{\sigma}$.	

As in \cite[Lemma 13]{beurlingultradiffmixedsetting} one shows that, for all $n \ge 2$ and all $t\in[x_n,x_{n+1})$,
\begin{align}\label{equ4849combi}
	(n-2)\omega(t)\le\widetilde{\omega}(t) &\le n\omega(t),
	\\ \label{equ410411combi}
	(n-2)\sigma(t)\le\widetilde{\sigma}(t) &\le n\sigma(t).
\end{align}
Consequently, $\omega(t)=o(\widetilde{\omega}(t))$ and $\sigma(t)=o(\widetilde{\sigma}(t))$ as $t\rightarrow\infty$.
In particular, $\widetilde{\omega}$ satisfies \eqref{om3} provided that $\omega$ does; similarly for $\widetilde{\sigma}$.
Hence $\widetilde{\omega},\widetilde{\sigma}\in\hyperlink{omset0}{\mathcal{W}_0}$ provided that $\omega,\sigma\in\hyperlink{omset0}{\mathcal{W}_0}$.
Combining \eqref{equ43new} and \eqref{equ410411combi} yields $\widetilde{\sigma}(t)=o(f(t))$ as $t\rightarrow\infty$.
So also \eqref{lemma13generalizationultraholomequ} is shown.

Now $\gamma(\widetilde{\omega},\widetilde{\sigma})>1$ follows from \eqref{equ41new}, \eqref{equ42new}, \eqref{equ4849combi}, and \eqref{equ410411combi}
as in the proof of \cite[Lemma 13]{beurlingultradiffmixedsetting}.
By a similar argument (see \emph{loc.\ cit.}),
$\widetilde \omega$, $\widetilde \sigma$ satisfy \eqref{om1} if $\omega$, $\sigma$ do so.
\end{proof}

We also need the following observation.

\begin{lemma} \label{lem:hatflip}
	Let $\omega,\sigma \in\hyperlink{omset1}{\mathcal{W}}$ satisfy $\omega(t) = o(\sigma(t))$ as $t \to \infty$,
	and consider  $\Omega = \{W^{[x]} : x>0\} := \mathcal M(\omega)$ and $\Sigma = \{S^{[x]} : x>0\} := \mathcal M(\sigma)$.
	Then
	\[
		\forall H>0 \; \forall x>0 \; \exists C>0 : \widehat S^{[x]} \le C\, \widehat W^{[Hx]}.
	\]
\end{lemma}

\begin{proof}
	\cite[Lemma 5.16]{compositionpaper} implies
	\[
		\forall H>0 \; \forall x>0 \; \exists C>0 : S^{[x]} \le C\, W^{[Hx]}.
	\]
	which is obviously equivalent to the assertion.
\end{proof}

\subsection{Proof of \texorpdfstring{\Cref{Thm2ultraholombeurling}}{Theorem 4.1}}

(i) The argument follows a well-known scheme used, e.g., in
the proofs of \cite[Theorem 4.5]{BonetBraunMeiseTaylorWhitneyextension}, \cite[Theorem 7.2]{sectorialextensions1}, and \cite[Theorem 2]{beurlingultradiffmixedsetting}.

Fix $r>0$ such that $\gamma<r<\gamma(\sigma,\omega)$.
We consider the weight functions $\omega^r,\sigma^r\in\hyperlink{omset1}{\mathcal{W}}$ which
satisfy $\gamma(\sigma^r,\omega^r) > 1$, by \eqref{indextrans}.

Set $\Sigma := \mathcal{M}(\sigma)$ and $\Omega := \mathcal{M}(\omega)$.
Let $\widehat a=(\widehat a_p) \in\Lambda_{(\tau_1)}=\Lambda_{(\widehat{\Sigma})}$ be given.
Our goal is to show that $\widehat a\in \mathcal{B}(\mathcal{A}_{(\widehat{\Omega})}(S_{\gamma}))=\mathcal{B}(\mathcal{A}_{(\tau_2)}(S_{\gamma}))$.

To this end we consider $a:=(a_p)=(\widehat a_p/p!)$ and the function
$$
g(t):=\log\max\{1,|a_p|\}, \quad p\le t<p+1,\;p\in\NN.$$
Since $a \in\Lambda_{(\Sigma)}=\Lambda_{(\sigma)}$, for each integer $j \ge 1$ there exists $C_j>0$ such that
\[
	g(t) \le j \varphi^*_\sigma (t/j) + C_j, \quad \text{ for all } t \ge 0.
\]
Using \cite[Lemma 4.3]{BonetBraunMeiseTaylorWhitneyextension} (for $\psi_j := j \varphi^*_\sigma (t/j)$), we conclude that there is a
convex function $h : [0,\infty) \to [0,\infty)$ and a positive sequence $(D_j)_j$ such that
\[
	g \le h \le \inf_{j \ge 1} (j \varphi^*_\sigma (t/j) + D_j).
\]
(In this step we need \eqref{om1} for $\sigma$ in order to assume w.l.o.g.\ that $\sigma$, and hence $\varphi_{\sigma}$, is of class $\mathcal{C}^1$,
and $\varphi_{\sigma}'(t) \to \infty$ as $t \to \infty$. Thus $\varphi_{\sigma}^*$ is differentiable and $(\varphi_{\sigma}^*)' = (\varphi_{\sigma}')^{-1}$.
See \cite[p.210]{BraunMeiseTaylor90}, \cite[Lemma 15]{beurlingultradiffmixedsetting} and also \cite[Theorem 4.5]{BonetBraunMeiseTaylorWhitneyextension}.)

Then the Young conjugate $h^*$ of $h$ satisfies
\[
	h^*(t) \ge j \varphi_\sigma(t) - D_j, \quad \text{ for all } t \text{ and all }j,
\]
and thus
\[
	\sigma(t) = \varphi_\sigma(\log t)  \le \frac{1}{j} f(t) + \frac{D_j}{j},
\]
where $f(t):=h^{*}(\max\{0,\log(t)\})$. Hence, $\sigma(t)=o(f(t))$ as $t\rightarrow\infty$, and
putting $f^r(t):=f(t^r)$ we have $\sigma^r(t)=o(f^r(t))$.

Let us apply \Cref{lemma13generalizationultraholom} to $\sigma^r$, $\omega^r$, and $f^r$ (instead of $\sigma$, $\omega$, and $f$ in the lemma).
We obtain weights $\widetilde{\sigma},\widetilde{\omega}\in\hyperlink{omset1}{\mathcal{W}}$ satisfying
$\gamma(\widetilde{\sigma},\widetilde{\omega})>1$	
and
\begin{equation*}
\omega^r(t)=o(\widetilde{\omega}(t)), \quad \sigma^r(t)=o(\widetilde{\sigma}(t)), \quad\widetilde{\sigma}(t)=o(f^r(t)) \qquad\text{ as }t\rightarrow\infty.
\end{equation*}
Hence, in view of \eqref{indextrans}, we have weights
$\widetilde{\sigma}^{1/r},\widetilde{\omega}^{1/r} \in\hyperlink{omset1}{\mathcal{W}}$ such that
\begin{gather} \label{gammatilde}
	\gamma(\widetilde{\sigma}^{1/r},\widetilde{\omega}^{1/r})>r,
	\\\label{growthtilde}
	\omega(t)=o(\widetilde{\omega}^{1/r}(t)), \quad \sigma(t)=o(\widetilde{\sigma}^{1/r}(t)), \quad\widetilde{\sigma}^{1/r}(t)=o(f(t)) \qquad\text{ as }t\rightarrow\infty.
\end{gather}
In particular, there is a constant $B>0$ such that $\widetilde{\sigma}^{1/r} \le f + B$, whence for all $t\ge 0$
\[
	\varphi_{\widetilde{\sigma}^{1/r}}(t) = \widetilde{\sigma}^{1/r}(e^t) \le f(e^t) + B = h^*(t) +B
\]
and so (since $h$ is convex)
\[
	g \le h =h^{**} \le \varphi_{\widetilde{\sigma}^{1/r}}^* +B.
\]
By the definition of $g$, we find
$a \in\Lambda_{\{\widetilde{\sigma}^{1/r}\}} = \Lambda_{\{\widetilde{\Sigma}^{1/r}\}}$,
where $\widetilde{\Sigma}^{1/r} := \mathcal{M}(\widetilde{\sigma}^{1/r})$,
which is equivalent to
$$\widehat a \in \Lambda_{\{\widehat{\widetilde{\Sigma}^{1/r}}\}}.$$
By \eqref{gammatilde}, we can apply \Cref{Thm2ultraholomrecall} to $\widetilde{\omega}^{1/r}$ and $\widetilde{\sigma}^{1/r}$ (and $\gamma=r$)
and conclude
$$\widehat a \in\mathcal{B}(\mathcal{A}_{\{\widehat{\widetilde{\Omega}^{1/r}}\}}(S_r)),$$	
where $\widetilde{\Omega}^{1/r} := \mathcal{M}(\widetilde{\omega}^{1/r})$.
By \eqref{growthtilde} and \Cref{lem:hatflip},
$\mathcal{A}_{\{\widehat{\widetilde{\Omega}^{1/r}}\}}(S_r) \subseteq \mathcal{A}_{(\widehat \Omega)}(S_r)$
which gives the assertion
because $\gamma < r$.

(ii) Fix $r>0$ such that $\gamma<r<\gamma(\sigma,\omega)$. Then $\gamma(\sigma^r,\omega^r) > 1$ as above.
The assumption $\sigma(t)=o(\tau(t))$ gives $\sigma^r(t)=o(\tau^r(t))$ as $t \to \infty$.
Applying \Cref{lemma13generalizationultraholom} to $\sigma^r$, $\omega^r$, and $\tau^r$ (instead of $\sigma$, $\omega$, and $f$ in the lemma)
and repeating the steps that led to \eqref{gammatilde} and \eqref{growthtilde},
yields weight functions
$\widetilde{\omega}^{1/r},\widetilde{\sigma}^{1/r}\in\hyperlink{omset1}{\mathcal{W}}$ satisfying \eqref{gammatilde}
and
\begin{equation}\label{growthtilde2}
\omega(t)=o(\widetilde{\omega}^{1/r}(t)), \quad \sigma(t)=o(\widetilde{\sigma}^{1/r}(t)), \quad\widetilde{\sigma}^{1/r}(t)=o(\tau(t)) \qquad\text{ as }t\rightarrow\infty.
\end{equation}
By \Cref{Thm2ultraholomrecall},
there exists $k_0>0$ such that for all $x>0$ and $h>0$ we have an extension operator
\[
	\Lambda_{(\widehat{\widetilde S^{1/r}})^{[x]},h} \to \mathcal{A}_{(\widehat{\widetilde W^{1/r}})^{[8x]},k_0h}(S_{r}), 	
\]
where $\widehat{\widetilde \Sigma^{1/r}} = \{(\widehat{\widetilde S^{1/r}})^{[x]} : x>0\}$
and $\widehat{\widetilde \Omega^{1/r}} = \{(\widehat{\widetilde W^{1/r}})^{[x]} : x>0\}$.
By \eqref{growthtilde2}, \Cref{lem:hatflip}, and \Cref{sec:inclusion},
we have continuous inclusions
\[
	\mathcal{A}_{(\widehat{\widetilde W^{1/r}})^{[8x]},k_0h}(S_{r})
	\hookrightarrow \mathcal{A}_{\{\widehat{\widetilde \Omega^{1/r}}\}}(S_{r})
	\hookrightarrow \mathcal{A}_{(\widehat{\Omega})}(S_{r})
	= \mathcal{A}_{(\tau_2)}(S_{r}).
\]

Let $T := \mathcal{M}(\tau)$.
We have the continuous inclusions $\Lambda_{\{T\}} = \Lambda_{\{\tau\}} \subseteq \Lambda_{(\widetilde \sigma^{1/r})} = \Lambda_{(\widetilde{\Sigma}^{1/r})}$,
again by \eqref{growthtilde2}. (Note that, for the first equality, \eqref{om1} for $\tau$ is needed; cf.\ \eqref{newexpabsorb} and \Cref{prop:topiso}).
Furthermore,
the linear mappings $\Lambda_{\{T\}} \to \Lambda_{\{\widehat T\}}$
and $\Lambda_{(\widetilde{\Sigma}^{1/r})} \to \Lambda_{(\widehat{\widetilde{\Sigma}^{1/r}})}$
given by
$a = (a_p) \mapsto \widehat a= (p!\, a_p)$ are topological isomorphisms with inverse $(\widehat a_p) \mapsto (\widehat a_p/p!)$.
Hence, the inclusion
$\Lambda_{\{\tau_3\}} = \Lambda_{\{\widehat T\}}  \subseteq  \Lambda_{(\widehat{\widetilde{\Sigma}^{1/r}})}$
is continuous.
Then the composite
\[
	\Lambda_{\{\tau_3\}}
	\hookrightarrow \Lambda_{(\widehat{\widetilde{\Sigma}^{1/r}})}
	\hookrightarrow \Lambda_{(\widehat{\widetilde S^{1/r}})^{[x]},h}
	\rightarrow \mathcal{A}_{(\widehat{\widetilde W^{1/r}})^{[8x]},k_0h}(S_{r})
	\hookrightarrow \mathcal{A}_{(\tau_2)}(S_{r})
	\hookrightarrow \mathcal{A}_{(\tau_2)}(S_{\gamma})
\]
is the required extension operator.
The proof is complete.

\subsection{Extensions controlled by the order of quasianalyticity}\label{orderofquasiomega}

It is possible to have extensions on sectors of opening up to $\pi \mu(\omega)$, if one permits that $\sigma$ depends on the opening.
We shall see that this is a consequence of \Cref{Thm2ultraholombeurling}. For a Roumieu version see \cite[Theorem 6]{mixedsectorialextensions}.

\begin{theorem}\label{Thm6ultraholombeurling}
Let $\omega\in\hyperlink{omset1}{\mathcal{W}}$. Then:
\begin{itemize}
\item[(i)] For any $0<r<\mu(\omega)$ there exists $\sigma\in\hyperlink{omset1}{\mathcal{W}}$ such that for all $0<\gamma<r$ we have
    $$\mathcal{B}(\mathcal{A}_{(\tau_2)}(S_{\gamma}))\supseteq \Lambda_{(\tau_1)},$$
where $\tau_1 := \omega(\widehat{\mathcal{M}(\sigma)})$ and $\tau_2 := \omega(\widehat{\mathcal{M}(\omega)})$.

\item[(ii)] If $\tau\in\hyperlink{omset1}{\mathcal{W}}$ satisfies $\sigma(t)=o(\tau(t))$ as $t\rightarrow\infty$, then there exists
an extension operator
$$\mathcal E^{\tau_3,\tau_2}: \Lambda_{\{\tau_3\}}\longrightarrow\mathcal{A}_{(\tau_2)}(S_{\gamma}),$$
where $\tau_3 := \omega(\widehat{\mathcal{M}(\tau)})$.

\item[(iii)] The weight function $\sigma$ is minimal (up to equivalence) among all $\tau\in\hyperlink{omset1}{\mathcal{W}}$ satisfying $\tau\hyperlink{ompreceq}{\preceq}\omega$
and $(\tau,\omega)_{\gamma_r}$.
\end{itemize}
\end{theorem}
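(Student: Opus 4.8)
The plan is to deduce this theorem from the Beurling result controlled by the mixed growth index, \Cref{Thm2ultraholombeurling}, feeding in the weight function $\sigma$ supplied by the Roumieu counterpart \Cref{Thm6ultraholomrecall}. The bridge between the two is the observation that the condition $(\sigma,\omega)_{\gamma_r}$ furnished by \Cref{Thm6ultraholomrecall} forces $\gamma(\sigma,\omega)\ge r$ by the very definition of the mixed growth index (see \Cref{sec:MGI}); hence, for the given $0<\gamma<r$ we have $0<\gamma<r\le\gamma(\sigma,\omega)$, which is exactly the hypothesis of \Cref{Thm2ultraholombeurling}. Since the weights $\tau_1,\tau_2,\tau_3$ are defined by identical formulas in both theorems, parts (i) and (ii) will follow once $\sigma$ is shown to lie in $\hyperlink{omset1}{\mathcal{W}}$.

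In detail, for part (i) I would fix $0<r<\mu(\omega)$ and apply \Cref{Thm6ultraholomrecall} to obtain a normalized weight function $\sigma$ satisfying \eqref{om3}, $\sigma\hyperlink{ompreceq}{\preceq}\omega$, and $(\sigma,\omega)_{\gamma_r}$, minimal with these properties. The crucial point — and the main obstacle — is to upgrade $\sigma$ to an element of $\hyperlink{omset1}{\mathcal{W}}$; a priori \Cref{Thm6ultraholomrecall} only guarantees \eqref{om3}. Since minimality is claimed only up to equivalence, I am free to pick the representative $\sigma(t)\asymp\int_1^\infty\omega(ty)\,y^{-1-1/r}\,dy$ (finite because $r<\mu(\omega)$ forces \eqref{omnqr}), namely the smallest weight satisfying $(\cdot,\omega)_{\gamma_r}$. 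For this representative the missing conditions descend from those of $\omega$: writing $\varphi_{\sigma}(t)=\sigma(e^t)=\int_0^\infty\varphi_{\omega}(t+s)\,e^{-s/r}\,ds$ exhibits $\varphi_{\sigma}$ as an average of the convex functions $t\mapsto\varphi_{\omega}(t+s)$, so \eqref{om4} is inherited; and the bound $\omega(2s)\le D\omega(s)$ valid for large $s$ may be pulled through the integral to give $\sigma(2t)\le D\sigma(t)+\text{const}$, so \eqref{om1} holds as well. Thus $\sigma\in\hyperlink{omset1}{\mathcal{W}}$, and \Cref{Thm2ultraholombeurling}(i) applied to the pair $(\sigma,\omega)$ and the opening $\gamma<\gamma(\sigma,\omega)$ yields $\mathcal{B}(\mathcal{A}_{(\tau_2)}(S_{\gamma}))\supseteq\Lambda_{(\tau_1)}$.

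For part (ii) the same $\sigma$ and the same inequality $\gamma<\gamma(\sigma,\omega)$ place us under the hypotheses of \Cref{Thm2ultraholombeurling}(ii): given $\tau\in\hyperlink{omset1}{\mathcal{W}}$ with $\sigma(t)=o(\tau(t))$, that theorem directly produces the extension operator $\mathcal E^{\tau_3,\tau_2}:\Lambda_{\{\tau_3\}}\to\mathcal{A}_{(\tau_2)}(S_{\gamma})$ with $\tau_3=\omega(\widehat{\mathcal{M}(\tau)})$, so nothing further is required. Finally, part (iii) is immediate from the minimality assertion in \Cref{Thm6ultraholomrecall}: the defining conditions there, namely $\tau\hyperlink{ompreceq}{\preceq}\omega$ together with $(\tau,\omega)_{\gamma_r}$, coincide with those in (iii), and since the minimal $\sigma$ itself lies in $\hyperlink{omset1}{\mathcal{W}}$ (as just shown), restricting the competition to weights $\tau\in\hyperlink{omset1}{\mathcal{W}}$ leaves the minimal element unchanged.

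I expect the only genuine work to lie in verifying $\sigma\in\hyperlink{omset1}{\mathcal{W}}$ via the explicit integral representative; everything else reduces to quoting \Cref{Thm6ultraholomrecall} and \Cref{Thm2ultraholombeurling} and matching up the definitions of $\tau_1,\tau_2,\tau_3$.
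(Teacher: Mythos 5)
Your proof is correct and follows essentially the same route as the paper: both take $\sigma$ to be (an equivalent of) $\kappa^{1/r}_{\omega^r}(t)=\int_1^\infty \omega(ty)\,y^{-1-1/r}\,dy$, for which $(\sigma,\omega)_{\gamma_r}$ holds tautologically so that $\gamma(\sigma,\omega)\ge r>\gamma$, verify $\sigma\in\hyperlink{omset1}{\mathcal{W}}$, and then conclude (i)--(iii) by invoking \Cref{Thm2ultraholombeurling} and the definition of $\sigma$; your explicit checks of \eqref{om1} and \eqref{om4} via the integral representation simply replace the paper's citations. The only detail you gloss over is the normalization of $\sigma$ (the paper passes to an equivalent weight by redefining $\kappa^{1/r}_{\omega^r}$ near $0$), which is harmless.
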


\begin{remark}
Of course, \emph{minimality} refers to the relation $\hyperlink{ompreceq}{\preceq}$ which induces a partial ordering
on the set of equivalence classes of weight functions. The corresponding function (or sequence) space is then maximal; cf.\ \Cref{sec:inclusion}.  	
\end{remark}

\begin{proof}[Proof of \Cref{Thm6ultraholombeurling}]
Note that $\mu(\omega) > 0$, by \Cref{lem:OQ}.
For $0<r<\mu(\omega)$ we consider the weight function $\kappa^{1/r}_{\omega^{r}}(t)=\kappa_{\omega^{r}}(t^{1/r})$, where
\begin{equation*}\label{gammamixedvsomegamfctequ}
\kappa_{\omega}(t):=\int_1^{\infty}\frac{\omega(ty)}{y^2}dy=t\int_t^{\infty}\frac{\omega(y)}{y^2}dy.
\end{equation*}
Then $(\kappa^{1/r}_{\omega^r},\omega)_{\gamma_r}$ is valid by definition.
The weight function $\kappa^{1/r}_{\omega^r}$
has all properties defining $\hyperlink{omset1}{\mathcal{W}}$ except normalization which however
can be achieved by switching to an equivalent weight, say $\sigma$, by redefining $\kappa^{1/r}_{\omega^r}$ near $0$;
see \cite[Remark 1.2 $(b)$]{BonetBraunMeiseTaylorWhitneyextension} and
\cite[Remark 3.2 $(b)$]{BonetMeiseTaylorSurjectivity}.
Then $\gamma(\sigma,\omega)\ge r>\gamma$ and so the statement follows from \Cref{Thm2ultraholombeurling}.
The minimality of $\sigma$ is immediate from its definition and the relation $(\sigma,\omega)_{\gamma_r}$. Cf.\ \cite[p.1650]{mixedsectorialextensions}.
\end{proof}

\section{Applications to the weight sequence setting} \label{ultraholomweightsequ}

In this section we apply the extension results for weight functions to
classes defined by weight sequences.

\subsection{Mixed growth index \texorpdfstring{$\gamma(M,N)$}{gamma(M,N)}}

Cf.\ \cite[Section 3.1]{mixedsectorialextensions} and references therein.
For a weight sequence $M$ and $r>0$ we consider the condition
\begin{equation}
		\tag{$\gamma_r$} \label{gammar}
		\sup_{p\in\NN_{>0}}\frac{(\mu_p)^{1/r}}{p}\sum_{k\ge p}\Big(\frac{1}{\mu_k}\Big)^{1/r}<\infty.
\end{equation}
It is immediate that $M$ satisfies \eqref{gammar} if and only if $M^{1/r}$ satisfies \eqref{gamma1}.

For weight sequences $M, N$ such that $\mu/\nu$ is bounded, consider the condition
\begin{equation*}
\tag*{$(M,N)_{\gamma_r}$} \label{gammarmix}
\sup_{p\in\NN_{>0}}\frac{(\mu_p)^{1/r}}{p}\sum_{k\ge p}\Big(\frac{1}{\nu_k}\Big)^{1/r}<\infty.
\end{equation*}
The \emph{mixed growth index} is defined by
\begin{equation*}%\label{growthindex}
\gamma(M,N):=\sup\{r>0:  \text{ \ref{gammarmix} is satisfied}\}
\end{equation*}
and $\gamma(M,N):=0$ if \ref{gammarmix} holds for no $r>0$.
Note that $\gamma(M):=\gamma(M,M)$ is the growth index used in \cite{Thilliezdivision}; see also \cite{index,injsurj}.

\begin{remark}\label{beta3remark}
Let $M\in\hyperlink{LCset}{\mathcal{LC}}$ be given.

(i) $M$ satisfies \eqref{beta3} if and only if $\gamma(M)>0$; this follows from \cite[Theorem 3.11 (v)$\Leftrightarrow$(vii)]{index} applied to $\beta=0$.

(ii) We have $\gamma(\omega_M)\ge\gamma(M)$ and equality holds if $M$ has moderate growth; see \cite[Corollary 4.6]{index}.

(iii) $\omega_M$ satisfies \eqref{om1} if and only if $\gamma(\omega_M)>0$; see \cite[Corollary 2.14]{index}.
    So, if $M\in\hyperlink{LCset}{\mathcal{LC}}$ has moderate growth,
    then $\omega_M$ satisfies \eqref{om1} (i.e., $\omega_M \in\hyperlink{omset1}{\mathcal{W}}$) if and only if $M$ satisfies \eqref{beta3}.
    In general, for a sequence $N \in\hyperlink{LCset}{\mathcal{LC}}$ (not necessarily having moderate growth),
$\omega_N$ has the property \eqref{om1} if and only if
\[
	\exists L \in \NN_{>0} : \liminf_{p \to \infty} \frac{(N_{Lp})^{1/(Lp)}}{(N_p)^{1/p}} > 1,
\]
as it is shown in \cite[Theorem 3.1]{Schindl2021}.

(iv) These statements are consistent with the implication \cite[Lemma 12, $(2)\Rightarrow(4)$]{BonetMeiseMelikhov07}.
\end{remark}

\subsection{Extensions controlled by the mixed growth index}

The following theorem is a Beurling version of \cite[Theorem 4]{mixedsectorialextensions}.

\begin{theorem}\label{Thm4ultraholombeurling}
Let $M,N\in\hyperlink{LCset}{\mathcal{LC}}$ be such that
$\mu/\nu$ is bounded, $M$ has moderate growth, and
$\omega_M,\omega_N\in\hyperlink{omset1}{\mathcal{W}}$.
Then: 
\begin{itemize}
\item[(i)] $\gamma(M,N) = \gamma(\omega_M,\omega_N)>0$.

\item[(ii)] For any $0<\gamma<\gamma(M,N)$ we have the inclusion
$\mathcal{B}(\mathcal{A}_{(\widehat{N})}(S_{\gamma}))\supseteq\Lambda_{(\widehat{M})}$.

\item[(iii)] Let $L\in\hyperlink{LCset}{\mathcal{LC}}$ satisfy $L\hyperlink{mvartrian}{\vartriangleleft}M$ and assume that $\omega_L\in\hyperlink{omset1}{\mathcal{W}}$. Then there exists an extension operator
    $$\mathcal E^{L,N}:\Lambda_{\{\widehat{L}\}}\longrightarrow\mathcal{A}_{(\widehat{N})}(S_{\gamma}).$$
\end{itemize}
\end{theorem}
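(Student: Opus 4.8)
The plan is to reduce \Cref{Thm4ultraholombeurling} to the weight-function result \Cref{Thm2ultraholombeurling} by translating every hypothesis and conclusion through the associated-function dictionary developed in \Cref{weightscond}. The guiding principle is that, under moderate growth, the passage $M \mapsto \omega_M$ is faithful: it preserves the relevant growth indices and, after the hat-shift, intertwines the weight-sequence classes $\mathcal{A}_{(\widehat N)}$ with the weight-function classes $\mathcal{A}_{(\tau_2)}$ where $\tau_2 := \omega(\widehat{\mathcal{M}(\omega_N)})$.

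First I would establish part (i). The equality $\gamma(M,N) = \gamma(\omega_M,\omega_N)$ is the mixed analogue of \Cref{beta3remark}(ii); the cleanest route is the transformation formula. By the scaling identity $\gamma(M,N) = r\,\gamma(M^{1/r},N^{1/r})$ (the sequence analogue of \eqref{indextrafo}, which holds because \eqref{gammar} for $M$ is \eqref{gamma1} for $M^{1/r}$), and by \eqref{omegaMspower} relating $\omega_M^r$ to $\omega_{M^{1/r}}$, it suffices to compare the two indices at level $r=1$, i.e.\ to compare condition $(M,N)_{\gamma_1}$ with $(\omega_M,\omega_N)_{\gamma_1}$. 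The key input is that, since $M$ has moderate growth, $\gamma(\omega_M) = \gamma(M)$ by \Cref{beta3remark}(ii); the mixed comparison is carried out in \cite{index} (and \cite{mixedsectorialextensions}) by matching the discrete sum $\sum_{k\ge p}(1/\nu_k)^{1/r}$ against the integral $\int_1^\infty \omega_N(ty)/y^{1+1/r}\,dy$ via the defining supremum of $\omega_N$. Positivity $\gamma(M,N)>0$ then follows: the boundedness of $\mu/\nu$ gives $M \precsim N$, hence $\omega_N \preceq \omega_M$, and since $\omega_N \in \mathcal{W}$ (so \eqref{om1} holds, whence $\gamma(\omega_N)>0$ by \cite[Cor. 2.14]{index}), the properties of the mixed index in \Cref{sec:MGI} yield $\gamma(\omega_M,\omega_N) \ge \gamma(\omega_N) > 0$.

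Next I would prove (ii) and (iii) by direct transcription. Set $\sigma := \omega_M$ and $\omega := \omega_N$; these lie in $\mathcal{W}$ by hypothesis. By (i), the condition $0<\gamma<\gamma(M,N)$ is exactly $0<\gamma<\gamma(\sigma,\omega)$, so \Cref{Thm2ultraholombeurling} applies to the pair $(\sigma,\omega)$. It remains only to identify the spaces. For this I invoke the dictionary in \Cref{sec:notation}: with $\mathcal{M} := \mathcal{M}(\omega_N)$ one has $\omega(\widehat{\mathcal{M}}) = \tau_2$ equivalent to $\omega_{\widehat{N}}$, and \textbf{because} $\tau_2 \in \mathcal{W}$ is equivalent to $\omega_{\widehat N}$, \Cref{sec:notation} gives the topological isomorphisms $\mathcal{A}_{(\tau_2)}(S_\gamma) = \mathcal{A}_{(\widehat{\mathcal{M}})}(S_\gamma) = \mathcal{A}_{(\widehat N)}(S_\gamma)$ and likewise $\Lambda_{(\tau_1)} = \Lambda_{(\widehat M)}$. (One needs that $\omega_{\widehat N}$ is equivalent to $\omega(\widehat{\mathcal{M}(\omega_N)})$, which is the content of the remark at the end of \Cref{sec:notation} combined with \Cref{prop:topiso}.) Under these identifications, $\mathcal{B}(\mathcal{A}_{(\tau_2)}(S_\gamma)) \supseteq \Lambda_{(\tau_1)}$ becomes precisely the assertion of (ii).

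For (iii), the hypothesis $L \vartriangleleft M$ gives $(M_p/L_p)^{1/p} \to \infty$, which at the level of associated functions reads $\omega_M(t) = o(\omega_L(t))$, equivalently (with $\sigma = \omega_M$ and $\tau := \omega_L$) the condition $\sigma(t) = o(\tau(t))$ as $t\to\infty$ required in \Cref{Thm2ultraholombeurling}(ii). Since $\omega_L \in \mathcal{W}$ by assumption, that theorem furnishes an extension operator $\Lambda_{\{\tau_3\}} \to \mathcal{A}_{(\tau_2)}(S_\gamma)$ with $\tau_3 = \omega(\widehat{\mathcal{M}(\omega_L)})$; identifying $\Lambda_{\{\tau_3\}} = \Lambda_{\{\widehat L\}}$ as above yields $\mathcal{E}^{L,N}$. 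I expect the main obstacle to be the verification of (i), specifically the passage $L \vartriangleleft M \Leftrightarrow \omega_M = o(\omega_L)$ and the sum-to-integral comparison underlying $\gamma(M,N) = \gamma(\omega_M,\omega_N)$: these require the moderate-growth hypothesis on $M$ in an essential way and amount to the careful estimates of \cite{index}, whereas everything downstream is a formal dictionary translation once (i) is in hand.
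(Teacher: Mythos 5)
Your proposal follows essentially the same route as the paper: part (i) via the identity $\gamma(M,N)=\gamma(\omega_M,\omega_N)$ (which the paper simply imports from \cite[Lemma 4]{mixedsectorialextensions}) together with $\gamma(\omega_N)>0$ coming from \eqref{om1}, and parts (ii), (iii) by feeding $\sigma=\omega_M$, $\omega=\omega_N$, $\tau=\omega_L$ into \Cref{Thm2ultraholombeurling} and translating the spaces through the associated weight matrices. The conclusion is correct, but one translation step is overstated. The claimed topological identity $\mathcal{A}_{(\tau_2)}(S_\gamma)=\mathcal{A}_{(\widehat N)}(S_\gamma)$ is not available: by \Cref{sec:notation}, $\tau_2$ only identifies the \emph{matrix} class $\mathcal{A}_{(\widehat\Omega)}(S_\gamma)=\bigcap_{x>0}\mathcal{A}_{(\widehat W^{[x]})}(S_\gamma)$ with $\Omega=\mathcal{M}(\omega_N)$, and collapsing this intersection to the single level $\widehat W^{[1]}=\widehat N$ would require all $W^{[x]}$ to be equivalent, i.e.\ moderate growth of $N$ (equivalently \eqref{om6} for $\omega_N$), which is \emph{not} among the hypotheses. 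What does hold, and is all that is needed, is the one-sided inclusion $\mathcal{A}_{(\tau_2)}(S_\gamma)=\mathcal{A}_{(\widehat\Omega)}(S_\gamma)\subseteq\mathcal{A}_{(\widehat W^{[1]})}(S_\gamma)=\mathcal{A}_{(\widehat N)}(S_\gamma)$, under which the Borel image can only grow; this is exactly how the paper argues. The same caveat applies to your identification $\Lambda_{\{\tau_3\}}=\Lambda_{\{\widehat L\}}$ in (iii): without moderate growth of $L$ one only has $\Lambda_{\{\widehat L\}}\hookrightarrow\Lambda_{\{\tau_3\}}$, again the direction needed to precompose the extension operator. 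By contrast, the equality $\Lambda_{(\widehat M)}=\Lambda_{(\tau_1)}$ genuinely requires moderate growth of $M$ (so that all sequences in $\mathcal{M}(\omega_M)$ are equivalent, combined with $S^{[1]}=M$), and there the hypothesis is available — you correctly locate where moderate growth enters. One further notational slip: with the paper's convention ($\sigma\hyperlink{ompreceq}{\preceq}\tau$ iff $\tau(t)=O(\sigma(t))$), boundedness of $\mu/\nu$ gives $M\hyperlink{mpreceq}{\precsim}N$ and hence $\omega_M\hyperlink{ompreceq}{\preceq}\omega_N$, not $\omega_N\hyperlink{ompreceq}{\preceq}\omega_M$; your subsequent appeal to $\gamma(\omega_M,\omega_N)\ge\gamma(\omega_N)>0$ is consistent only with the correct direction.
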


\begin{proof}
(i) By \cite[Lemma 4]{mixedsectorialextensions} we have $\gamma(M,N)=\gamma(\omega_M,\omega_N)$.
Condition \eqref{om1} for $\omega_N$ yields $\gamma(\omega_N)>0$ (see \Cref{beta3remark}) and so $\gamma(\omega_M,\omega_N)\ge\gamma(\omega_N)>0$ (by \Cref{sec:MGI}).

(ii) 
Let $\Omega := \mathcal{M}(\omega_N)$ and $\Sigma := \mathcal{M}(\omega_M)$.
Since $M$ has moderate growth, all sequences in $\Sigma =\{S^{[x]}: x>0\}$ are equivalent (see \Cref{assofuncproper,omegaproperties}), hence the same holds for
$\widehat{\Sigma}$.
The proof of \cite[Theorem 6.4]{testfunctioncharacterization} yields $S^{[1]}= M$, hence $\widehat{S}^{[1]}=\widehat{M}$ and so
$\Lambda_{(\widehat{M})}=\Lambda_{(\widehat{S}^{[1]})}=\Lambda_{(\widehat{\Sigma})}=\Lambda_{(\tau_1)}$
for the weight function $\tau_1 = \omega_{\widehat{M}} \in\hyperlink{omset1}{\mathcal{W}}$.
By \Cref{Thm2ultraholombeurling} applied to to $\omega_M$ and $\omega_N$, we conclude, for $\tau_2 = \omega(\widehat{\Omega})\in\hyperlink{omset1}{\mathcal{W}}$,
$$\Lambda_{(\widehat{M})} =\Lambda_{(\tau_1)} \subseteq \mathcal{B}(\mathcal{A}_{(\tau_2)}(S_{\gamma}))
=\mathcal{B}(\mathcal{A}_{(\widehat{\Omega})}(S_{\gamma}))\subseteq\mathcal{B}(\mathcal{A}_{(\widehat{N})}(S_{\gamma}));$$
the last inclusion is clear by the definition of the classes and since $\widehat{N}=\widehat{W}^{[1]} \in \widehat{\Omega}$.

(iii) The relation $L\hyperlink{mvartrian}{\vartriangleleft}M$ implies, by the definition of associated weight functions,
\[
\forall A\ge 1\;\exists C\ge 1\;\forall t\ge 0 : \omega_{M}(At)\le\omega_{L}(t)+C.
\]
In combination with the fact that $\omega_{M}$ satisfies \eqref{om6}, since $M$ has moderate growth (see \Cref{assofuncproper}),
we infer that
$\omega_{M}(t)=o(\omega_{L}(t))$ as $t\rightarrow\infty$.
Now it suffices to apply \Cref{Thm2ultraholombeurling}(ii) to $\omega_{L}$, $\omega_{M}$, and $\omega_{N}$ (instead of $\tau$, $\sigma$, and $\omega$)
and to note that $\mathcal{A}_{(\tau_2)}(S_{\gamma})\hookrightarrow \mathcal{A}_{(\widehat N)}(S_{\gamma})$
and
$\Lambda_{\{\widehat{L}\}} \hookrightarrow \Lambda_{\{\widehat{\mathcal{M}(\omega_L)}\}}$; see \Cref{sec:inclusion}.
\end{proof}

\begin{example}\label{Thm4ultraholombeurlingrem}
Here is an explicit example of sequences which fulfill the assumptions of \Cref{Thm4ultraholombeurling}
and underline its value:
Let $\gamma> \gamma' >1$.
By \cite[Lemma 13, Theorem 7]{mixedsectorialextensions}, there exist sequences $M,M'\in\hyperlink{LCset}{\mathcal{LC}}$
having moderate growth
such that
\begin{itemize}
	\item $p^\gamma \le \mu_p \le p^{\gamma(2\gamma-1)}$ and $p^{\gamma'} \le \mu'_p \le p^{\gamma'(2\gamma'-1)}$ for all $p\in \NN$,
	\item $\gamma(M) = \gamma(M') = 0$.
\end{itemize}
For $\varepsilon>0$ set $M_{\varepsilon} := (p!^{\varepsilon} M_p)$ and
$M_\varepsilon':=(p!^{\varepsilon} M'_p)$.
Then $\gamma(M_\varepsilon) = \gamma(M'_\varepsilon) = \varepsilon>0$ (see \cite[Theorem 3.11]{index})
and thus $\omega_{M_\varepsilon}$ and $\omega_{M'_\varepsilon}$ satisfy \eqref{om1} (see \Cref{beta3remark}).
By construction, $M_\varepsilon$ and $M'_\varepsilon$ have moderate growth.
If we additionally assume that $$\gamma'(2\gamma'-1)\le\gamma,$$
then $\mu_\varepsilon' \le \mu_\varepsilon$ and
$(M'_\varepsilon,M_\varepsilon)_{\gamma_r}$, for all $0<r<\gamma$, thus
$\gamma(M'_\varepsilon,M_\varepsilon)\ge\gamma$.
So \Cref{Thm4ultraholombeurling} can be applied to $M'_\varepsilon$ and $M_\varepsilon$. 
Note that, by choosing $\gamma$ and $\varepsilon$ appropriately,
one can make $\gamma(M'_\varepsilon,M_\varepsilon)
= \gamma(\omega_{M'_\varepsilon},\omega_{M_\varepsilon})$ arbitrarily large
and $\gamma(M_\varepsilon) = \gamma(M'_\varepsilon)
= \gamma(\omega_{M_\varepsilon}) = \gamma(\omega_{M'_\varepsilon})  >0$  arbitrarily small.
\end{example}

\subsection{Order of quasianalyticity \texorpdfstring{$\mu(N)$}{mu(N)}}

In analogy to \Cref{orderofquasiomega}
we consider the \emph{order of quasianalyticity} for a weight sequence
$N\in\hyperlink{LCset}{\mathcal{LC}}$ (see \cite[Section 3.2]{mixedsectorialextensions}):
\begin{equation*}
\mu(N):=\sup\Big\{r>0: \sum_{k\ge 1}\Big(\frac{1}{\nu_k}\Big)^{1/r}<\infty\Big\}
=\sup\{r>0: N \text{ satisfies } \eqref{mnqr}\}
\end{equation*}
and $\mu(N) := 0$ if \eqref{mnqr} holds for no $r>0$.
Note that $\mu(N)^{-1}$ coincides with the \emph{exponent of convergence} of $N$;
cf.\ \cite[Prop. 2.13, Def. 3.3, Thm. 3.4]{Sanzflatultraholomorphic} and \cite[p.145]{injsurj}.
If $M\in\hyperlink{LCset}{\mathcal{LC}}$ is equivalent to $N$, then $\mu(M)=\mu(N)$.

\subsection{Descendant construction} \label{descendant}
We recall a construction from \cite[Remark 9]{mixedsectorialextensions},
based on \cite[Section 4.1]{whitneyextensionweightmatrix}.
Let $N\in\hyperlink{LCset}{\mathcal{LC}}$ be non-quasianalytic and $r>0$.
The \emph{descendant} of $N^{1/r}$
is the sequence $S = S(N,r)$ defined by $S_p = \sigma_0 \sigma_1 \cdots \sigma_p$, where
$\sigma_0 := 1$ and
$$\sigma_p:=\frac{\tau_1 p}{\tau_p},
\qquad \tau_p:=\frac{p}{(\nu_p)^{1/r}}+\sum_{j\ge p}\Big(\frac{1}{\nu_j}\Big)^{1/r},\qquad p\ge 1. $$
Notice that $S\in\hyperlink{LCset}{\mathcal{LC}}$ is strongly log-convex,
see \cite[Lemma 4.2]{whitneyextensionweightmatrix}; for more of its properties we refer to
\cite[Remark 9]{mixedsectorialextensions}.
For us the sequence $L = L(N,r) \in\hyperlink{LCset}{\mathcal{LC}}$ defined by
\begin{equation}\label{sequenceL}
L:=S^r
\end{equation}
is crucial. We have $L_p = \lambda_0 \lambda_1 \cdots \lambda_p$ with $\lambda :=\sigma^r$.
It has the following properties (see \cite[Remark 9, Lemma 6]{mixedsectorialextensions}):
\begin{itemize}
\item[(i)] 
$(L,N)_{\gamma_r}$ and thus
$\gamma(L,N)\ge r$.

\item[(ii)] $\lambda/\nu$ is bounded
and so $(L,N)_{\gamma_{r'}}$ for all $0<r'\le r$.

\item[(iii)] If $M\in\hyperlink{LCset}{\mathcal{LC}}$ satisfies \ref{gammarmix} and $\mu/\nu$ is bounded,
then also $\mu/\lambda$ is bounded.
Consequently, $L$ is maximal (up to multiplication of $\lambda$ by a constant)
among all sequences $M$ with \ref{gammarmix} and $\mu/\nu$ bounded.

\item[(iv)] $L$ has moderate growth if and only if
\begin{equation}\label{descendantmgequ}
\exists C\ge 1\;\forall k\in\NN_{>0} : \frac{(\nu_{2k})^{1/r}}{(\nu_k)^{1/r}}\le C
+C\frac{(\nu_{2k})^{1/r}}{2k}\sum_{j\ge 2k}\frac{1}{(\nu_j)^{1/r}}.
\end{equation}
\end{itemize}
Moderate growth for $N^{1/r}$ (equivalently for $N$), implies \eqref{descendantmgequ}.
In general, the converse implication is not true; see \cite[Example 1]{mixedsectorialextensions}.

\subsection{Extensions controlled by the order of quasianalyticity}\label{orderofquasim}

Now we are ready to prove a Beurling version of \cite[Theorem 5]{mixedsectorialextensions}.

\begin{theorem}\label{Thm5ultraholombeurling}
Let $N\in\hyperlink{LCset}{\mathcal{LC}}$ satisfy \eqref{beta3}.
Then $\mu(N)>0$.
Let $0<r<\mu(N)$ and suppose that
\eqref{descendantmgequ} holds true for this value $r$.
Then there exists $L\in\hyperlink{LCset}{\mathcal{LC}}$ having moderate growth and with the following properties:
\begin{itemize}
\item[(i)] $\mathcal{B}(\mathcal{A}_{(\widehat{N})}(S_{\gamma}))\supseteq \Lambda_{(\widehat{L})}$ for each $0<\gamma<r$.
\item[(ii)] If $M\in\hyperlink{LCset}{\mathcal{LC}}$ satisfies $M\hyperlink{mvartrian}{\vartriangleleft}L$ and $\omega_M\in\hyperlink{omset1}{\mathcal{W}}$,
then there is an  extension operator
    $$\mathcal E^{M,N}:\Lambda_{\{\widehat{M}\}}\to \mathcal{A}_{(\widehat{N})}(S_{\gamma}).$$
\item[(iii)] $L$ is maximal among all $M\in\hyperlink{LCset}{\mathcal{LC}}$
with \ref{gammarmix} and $\mu/\nu$ bounded.
\end{itemize}
\end{theorem}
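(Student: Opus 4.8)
The plan is to deduce all four assertions from \Cref{Thm4ultraholombeurling} by feeding in the descendant $L = L(N,r) = S(N,r)^r$ of \Cref{descendant} as the source sequence, mirroring the way the Roumieu statement \cite[Theorem 5]{mixedsectorialextensions} is obtained from \cite[Theorem 4]{mixedsectorialextensions}. Assertion (i) I would dispatch first and separately: by \Cref{beta3remark}(i), \eqref{beta3} for $N$ is equivalent to $\gamma(N)>0$, and whenever \eqref{gammar} holds for some $r$ then, setting $p=1$, also \eqref{mnqr} holds, so that $r\le\mu(N)$; taking the supremum over all such $r$ gives $\gamma(N)\le\mu(N)$, whence $\mu(N)\ge\gamma(N)>0$.

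For (ii) I would fix $0<r<\mu(N)$ subject to \eqref{descendantmgequ} and verify the hypotheses of \Cref{Thm4ultraholombeurling} for the pair $(L,N)$. From \Cref{descendant} I read off that $(L,N)_{\gamma_r}$ holds, hence $\gamma(L,N)\ge r$ (item (i)); that $\lambda\le C\nu$, so $\mu/\nu$ with the roles $\lambda/\nu$ is bounded (item (ii)); and that \eqref{descendantmgequ} forces $L$ to have moderate growth (item (iv)). What remains is to check $\omega_L,\omega_N\in\mathcal{W}$. Both associated functions lie in $\mathcal{W}_0$ by \Cref{assofuncproper}(i), so only \eqref{om1} is at issue. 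For $\omega_N$ I cannot use moderate growth of $N$ (which is not assumed), but \Cref{beta3remark}(ii) gives $\gamma(\omega_N)\ge\gamma(N)>0$, and then \eqref{om1} for $\omega_N$ follows from \Cref{beta3remark}(iii). For $\omega_L$ I would exploit that $S$ is strongly log-convex (\Cref{descendant}): this makes $p\mapsto\sigma_p/p$ nondecreasing, so $\sigma_{Qp}/\sigma_p\ge Q$ and therefore $\lambda_{Qp}/\lambda_p=(\sigma_{Qp}/\sigma_p)^r\ge Q^r>1$ for every $Q\ge 2$; thus $L$ satisfies \eqref{beta3}, and together with its moderate growth \Cref{beta3remark}(iii) yields \eqref{om1} for $\omega_L$.

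With the hypotheses secured, \Cref{Thm4ultraholombeurling}(ii) applied to $(L,N)$ gives, for every $0<\gamma<r\le\gamma(L,N)$, the inclusion $\mathcal{B}(\mathcal{A}_{(\widehat N)}(S_\gamma))\supseteq\Lambda_{(\widehat L)}$, which is (ii). For (iii), given $M\in\mathcal{LC}$ with $M\vartriangleleft L$ and $\omega_M\in\mathcal{W}$, I would invoke \Cref{Thm4ultraholombeurling}(iii) for the same pair $(L,N)$, feeding $M$ in as the auxiliary sequence (the one called $L$ there); its hypotheses are exactly those just verified, and it outputs the required extension operator $\mathcal{E}^{M,N}:\Lambda_{\{\widehat M\}}\to\mathcal{A}_{(\widehat N)}(S_\gamma)$. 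Finally, (iv) is precisely item (iii) of \Cref{descendant}, the maximality of $L$ among sequences in $\mathcal{LC}$ satisfying \ref{gammarmix} with $\mu/\nu$ bounded.

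The one point demanding genuine care is the verification that $\omega_L$ and $\omega_N$ lie in $\mathcal{W}$, because the hypotheses here are weaker than those of \Cref{Thm4ultraholombeurling}: $N$ is only assumed to satisfy \eqref{beta3} rather than to have moderate growth, and $L$ is manufactured by the descendant construction instead of being prescribed. Both memberships come down to converting \eqref{beta3} into \eqref{om1} through \Cref{beta3remark}, using $\gamma(\omega_N)\ge\gamma(N)$ on the target side and the strong log-convexity of the descendant $S$ on the source side. Once these are in hand, the theorem is a formal consequence of \Cref{Thm4ultraholombeurling} and the properties of $L$ collected in \Cref{descendant}.
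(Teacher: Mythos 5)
Your proposal is correct and follows essentially the same route as the paper: reduce everything to \Cref{Thm4ultraholombeurling} applied to the descendant $L=L(N,r)=S(N,r)^r$, using \Cref{descendant} for $\gamma(L,N)\ge r$, the boundedness of $\lambda/\nu$, moderate growth, and maximality, and using \Cref{beta3remark} to convert \eqref{beta3} into \eqref{om1} for $\omega_N$ and $\omega_L$. The only cosmetic differences are that you prove $\gamma(N)\le\mu(N)$ directly (the paper cites \cite{mixedsectorialextensions}) and that you verify \eqref{beta3} for $L$ itself via $\lambda_{Qp}/\lambda_p\ge Q^r$, whereas the paper establishes it for $S$ and transfers \eqref{om1} to $\omega_L$ via \eqref{omegaMspower}; both are valid.
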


\begin{proof}
We have $\mu(N)\ge\gamma(M,N)\ge\gamma(N)$ for any sequence $M\in\hyperlink{LCset}{\mathcal{LC}}$ with
$\mu/\nu$ bounded, see \cite[Lemmas 3, 5, Remark 8]{mixedsectorialextensions},
and $\gamma(N)>0$, by \Cref{beta3remark}. Hence $\mu(N)>0$.

For $r$ as in the assumption, let $L=L(N,r)$ be the sequence defined in \eqref{sequenceL}. Then $L$ has moderate growth,
$\gamma(L,N)\ge r$, $\lambda/\nu$ is bounded, and $L$ satisfies (iii), by the properties listed in \Cref{descendant}.
Furthermore, $S = L^{1/r}$ is strongly log-convex and hence satisfies \eqref{beta3}.
Consequently, by \Cref{beta3remark}(iii),
$\omega_{S}$ satisfies \eqref{om1}.
In view of \eqref{sequenceL} and \eqref{omegaMspower},
also $\omega_{L}$ has the property \eqref{om1}, 
and so $\omega_{L}\in\hyperlink{omset1}{\mathcal{W}}$.
\Cref{beta3remark} also implies that $\omega_{N}\in\hyperlink{omset1}{\mathcal{W}}$.
Thus \Cref{Thm4ultraholombeurling} shows that $L$ satisfies (i) and (ii).
\end{proof}

\providecommand{\bysame}{\leavevmode\hbox to3em{\hrulefill}\thinspace}
\providecommand{\MR}{\relax\ifhmode\unskip\space\fi MR }
% \MRhref is called by the amsart/book/proc definition of \MR.
\providecommand{\MRhref}[2]{%
  \href{http://www.ams.org/mathscinet-getitem?mr=#1}{#2}
}
\providecommand{\href}[2]{#2}


\begin{thebibliography}{10}

\bibitem{BonetBraunMeiseTaylorWhitneyextension}
J.~Bonet, R.~W. Braun, R.~Meise, and B.~A. Taylor, \emph{Whitney's extension
  theorem for nonquasianalytic classes of ultradifferentiable functions},
  Studia Math. \textbf{99} (1991), no.~2, 155--184.

\bibitem{BonetMeiseMelikhov07}
J.~Bonet, R.~Meise, and S.~N. Melikhov, \emph{A comparison of two different
  ways to define classes of ultradifferentiable functions}, Bull. Belg. Math.
  Soc. Simon Stevin \textbf{14} (2007), 424--444.

\bibitem{BonetMeiseTaylorSurjectivity}
J.~Bonet, R.~Meise, and B.~A. Taylor, \emph{On the range of the {B}orel map for
  classes of non-quasianalytic functions}, North-Holland Mathematics Studies -
  Progress in Functional Analysis \textbf{170} (1992), 97--111.

\bibitem{BraunMeiseTaylor90}
R.~W. Braun, R.~Meise, and B.~A. Taylor, \emph{Ultradifferentiable functions
  and {F}ourier analysis}, Results Math. \textbf{17} (1990), no.~3-4, 206--237.

\bibitem{ChaumatChollet94}
J.~Chaumat and A.-M. Chollet, \emph{Surjectivit\`e de l'application restriction \`a
  un compact dans les classes de fonctions ultradiff\'erentiables}, Math. Ann.
  \textbf{298} (1994), 7--40.

\bibitem{index}
J.~Jim\'enez-Garrido, J.~Sanz, and G.~Schindl, \emph{Indices of {O}-regular
  variation for weight functions and weight sequences}, Rev. R. Acad. Cienc.
  Exactas F\'is. Nat. Ser. A. Mat. RACSAM \textbf{113} (2019), no.~4, 3659--3697.

\bibitem{injsurj}
\bysame, \emph{Injectivity and surjectivity of the asymptotic {B}orel map in
  {C}arleman ultraholomorphic classes}, J. Math. Anal. Appl. \textbf{469}
  (2019), no.~1, 136--168.

\bibitem{sectorialextensions}
\bysame, \emph{Sectorial extensions, via {L}aplace transforms, in
  ultraholomorphic classes defined by weight functions}, Results Math.
  \textbf{74} (2019), no.~27.

\bibitem{sectorialextensions1}
\bysame, \emph{Sectorial extensions for ultraholomorphic classes defined by
  weight functions}, Math. Nachr. \textbf{293} (2020), no.~11, 2140--2174.

\bibitem{mixedramisurj}
\bysame, \emph{The surjectivity of the {B}orel mapping in the mixed setting for
  ultradifferentiable ramification spaces}, Monatsh. Math. \textbf{191} (2020),
  no.~3, 537--576.

\bibitem{mixedsectorialextensions}
\bysame, \emph{Ultraholomorphic extension theorems in the mixed setting},
  Banach J. Math. Anal. \textbf{14} (2020), no.~4, 1630--1669.

\bibitem{Komatsu73}
H.~Komatsu, \emph{Ultradistributions. {I}. {S}tructure theorems and a
  characterization}, J. Fac. Sci. Univ. Tokyo Sect. IA Math. \textbf{20}
  (1973), 25--105.

\bibitem{Langenbruch94}
M.~Langenbruch, \emph{Extension of ultradifferentiable functions}, Manuscripta
  Math. \textbf{83} (1994), no.~2, 123--143.

\bibitem{LastraMalekSanzContinuousRightLaplace}
A.~Lastra, S.~Malek, and J.~Sanz, \emph{Continuous right inverses for the
  asymptotic {B}orel map in ultraholomorphic classes via a {L}aplace-type
  transform}, J. Math. Anal. Appl. \textbf{396} (2012), 724--740.

\bibitem{Sanzsummability}
\bysame, \emph{Summability in general {C}arleman ultraholomorphic classes}, J.
  Math. Anal. Appl. (2015), no.~430, 1175--1206.

\bibitem{mandelbrojtbook}
S.~Mandelbrojt, \emph{S\'eries adh\'erentes, r\'egularisation des suites,
  applications}, Gauthier-Villars, Paris, 1952.

\bibitem{petzsche}
H.-J. Petzsche, \emph{On {E}. {B}orel's theorem}, Math. Ann. \textbf{282}
  (1988), no.~2, 299--313.

\bibitem{beurlingultradiffmixedsetting}
A.~Rainer, \emph{On the extension of {W}hitney ultrajets of {B}eurling type},
  Results Math. \textbf{76}, 36 (2021),
  \url{https://doi.org/10.1007/s00025-021-01347-z}.

\bibitem{compositionpaper}
A.~Rainer and G.~Schindl, \emph{Composition in ultradifferentiable classes},
  Studia Math. \textbf{224} (2014), no.~2, 97--131.

\bibitem{whitneyextensionweightmatrix}
\bysame, \emph{Extension of {W}hitney jets of controlled growth}, Math. Nachr.
  \textbf{290} (2017), no.~14--15, 2356--2374.

\bibitem{whitneyextensionmixedweightfunction}
\bysame, \emph{On the extension of {W}hitney ultrajets}, Studia Math.
  \textbf{245} (2019), no.~3, 255--287.

\bibitem{whitneyextensionmixedweightfunctionII}
\bysame, \emph{On the extension of {W}hitney ultrajets, {I}{I}}, Studia Math.
  \textbf{250} (2020), no.~3, 283--295.

\bibitem{zbMATH03636427}
J.-P. {Ramis}, \emph{{Devissage Gevrey}}, {Journees singulieres de Dijon, 12-16
  juin 1978. Conferences organisees par l'\'equipe de topologie du Department
  de Math\'ematiques de l'Universite de Dijon}, 1978, pp.~173--204 (English).

\bibitem{Sanzflatultraholomorphic}
J.~Sanz, \emph{Flat functions in {C}arleman ultraholomorphic classes via
  proximate orders}, J. Math. Anal. Appl. (2014), no.~415, 623--643.

\bibitem{dissertation}
G.~Schindl, \emph{Exponential laws for classes of
  {D}enjoy-{C}arleman-differentiable mappings}, 2014, PhD Thesis, Universit\"at
  Wien, available online at
  \url{http://othes.univie.ac.at/32755/1/2014-01-26_0304518.pdf}.

\bibitem{testfunctioncharacterization}
\bysame, \emph{Characterization of ultradifferentiable test functions defined
  by weight matrices in terms of their {F}ourier transform}, Note di Matematica
  \textbf{36} (2016), no.~2, 1--35.

\bibitem{Schindl2021}
\bysame, \emph{On subadditivity-like conditions for associated weight functions},
2021,  available online at \url{https://arxiv.org/pdf/2101.11411.pdf}. 	

\bibitem{Schmetsvaldivia00}
J.~Schmets and M.~Valdivia, \emph{Extension maps in ultradifferentiable and
  ultraholomorphic function spaces}, Studia Math. \textbf{143} (2000), no.~3,
  221--250.

\bibitem{surjectivity}
\bysame, \emph{On certain extension theorems in the mixed
  {B}orel setting}, J. Math. Anal. Appl. \textbf{297} (2003), 384--403.

\bibitem{Thilliezdivision}
V.~Thilliez, \emph{Division by flat ultradifferentiable functions and sectorial
  extensions}, Results Math. \textbf{44} (2003), 169--188.

\end{thebibliography}
\end{document}